\newcommand{\R}{\mathbb{R}}
\newcommand{\Q}{\mathbb{Q}}
\newcommand{\Z}{\mathbb{Z}}
\newcommand{\N}{\mathbb{N}}
\DeclareMathOperator{\Diff}{Diff}
\DeclareMathOperator{\Homeo}{Homeo}
\DeclareMathOperator{\Fix}{Fix}
\DeclareMathOperator{\supp}{supp}
\DeclareMathOperator{\Int}{Int}
\DeclareMathOperator{\diam}{diam}
\DeclareMathOperator{\dist}{dist}
\newtheorem{theorem}{Theorem}[section]
\newtheorem{lemma}[theorem]{Lemma}
\newtheorem{proposition}[theorem]{Proposition}
\newtheorem{corollary}[theorem]{Corollary}
\newtheorem{question}[theorem]{Question}
\newtheorem{conjecture}[theorem]{Conjecture}
\newtheorem{example}[theorem]{Example}
\theoremstyle{definition}
\newtheorem{remark}[theorem]{Remark}
\theoremstyle{definition}
\begin{document}

\title{Nilpotent dynamics in dimension one: \\ Structure and smoothness}
\author{Kiran Parkhe}
\date{\today}
\maketitle

\begin{abstract}
Let $M$ be a connected one-manifold, and let $G$ be a finitely-generated nilpotent group of homeomorphisms of $M$. Our main result is that one can find a collection $\{I_{i, j}, M_{i, j}\}$ of open disjoint intervals with dense union in $M$, such that the intervals are permuted by the action of $G$, and the restriction of the action to any $I_{i, j}$ is trivial, while the restriction of the action to any $M_{i, j}$ is minimal and abelian.

It is a classical result that if $G$ is a finitely-generated, torsion-free nilpotent group, then there exist faithful continuous actions of $G$ on $M$. Farb and Franks \cite{F&F} showed that for such $G$, there always exists a faithful $C^1$ action on $M$. As an application of our main result, we show that \emph{every} continuous action of $G$ on $M$ can be conjugated to a $C^{1 + \alpha}$ action for any $\alpha < 1/d(G)$, where $d(G)$ is the degree of polynomial growth of $G$.
\end{abstract}

\section{Introduction}

In this paper, we study homeomorphisms and diffeomorphisms of one-manifolds, i.e., $\R, [0, 1), [0, 1]$, or $S^1$. For convenience, we will always assume they are orientation-preserving. From a dynamical standpoint, a homeomorphism of the line is not very interesting. It will have some closed (possibly empty) set of fixed points, on the complement of which points wander to the left or to the right.

The dynamics of a homeomorphism of the circle \emph{is} interesting; this theory is due to Poincar\'{e}. He defined a quantity called the \emph{rotation number} $\rho(f) \in \R/\Z$ of a homeomorphism $f$, and showed that this quantity contains valuable dynamical information about $f$. Namely, if $\rho(f)$ is zero (rational), $f$ has fixed (periodic) points. On the other hand, if $\rho(f)$ is irrational, $f$ is topologically semi-conjugate to a rotation of the circle by the angle $\rho(f)$. In this situation there are two sub-possibilities: (1) $f$ is in fact conjugate to a rotation, so its dynamics is minimal (every orbit is dense), or (2) there is a Cantor set on which $f$ is minimal, the complement of which consists of ``wandering intervals'' that never return to themselves. See \cite{Ghys} for more information.

A homeomorphism of a manifold $M$ can be viewed as giving an action of $\Z$ on $M$, via $n \mapsto f^n$. In this light, it is natural to consider actions of other groups, such as abelian groups, or more generally nilpotent groups. Formally, by an \emph{action} of $G$ on $M$ we mean a homomorphism $\phi\colon G \to \Homeo_+(M)$, the group of orientation-preserving homeomorphisms of $M$. (We deal only with discrete groups in this paper, so continuity of $\phi$ is not an issue.) We say the action is \emph{faithful} if $\phi$ is one-to-one.

For example, one can get faithful $\Z^n$ actions on $S^1$ (resp. $\R$) by taking $n$ rationally independent rotations (resp. translations). Notice that these actions will be minimal for any $n > 0$ on $S^1$, and for any $n > 1$ on $\R$.

There is a close interplay between group actions on the circle and on the line. First, given an abelian action on the circle with a \emph{global fixed point} (point fixed by the whole group), one can remove the fixed point, and what remains is an action on the line. Conversely, given an action on $\R$ we can add a point at infinity to get an action on $S^1$.

More interestingly, suppose we are given $n$ commuting homeomorphisms $f_1, \ldots, f_n$ of the line. It turns out that if there are no global fixed points for the action of $f_1, \ldots, f_n$, then some $f_i$ must have no fixed points. We can quotient $\R$ by the action of $f_i$, yielding a circle. Since the other homeomorphisms commute with $f_i$, they induce homeomorphisms $\bar{f_j}$ of the quotient circle $\R/f_i$. Thus, an action of $\Z^n$ on $\R$ with no global fixed points yields an action of $\Z^{n - 1}$ on $S^1$. The converse is also true: given an action of $\Z^{n - 1}$ on $S^1 = \R/\Z$, we can lift it to a group of $n - 1$ homeomorphisms of $\R$ commuting with translation by one, giving a $\Z^n$ action on $\R$.

It is a folklore theorem that any finitely-generated torsion-free nilpotent group admits faithful actions by homeomorphisms of the line (and hence also the circle). This is equivalent to the fact that such a group admits left-invariant total orders \cite{Witte}. Explicitly, following Farb and Franks \cite{F&F} we can construct actions of these groups as follows. Malcev's work \cite{Malcev} (see also \cite{Raghunathan}) shows that every finitely-generated torsion-free nilpotent group $G$ embeds in some $L_n = \{n \times n \text{ lower-triangular matrices with 1s on the diagonal}\}$. Thus, to define an action of $G$ by homeomorphisms of the line, we need only define an action of $L_n$.

We take lower-triangular matrices because they are convenient in the following way: the standard action of $L_n$ on $\Z^n$ (on the left) preserves the lexicographic order on $\Z^n$. It is possible to choose a collection of disjoint open subintervals of $\R$ with dense union in $\R$, indexed by $\Z^n$ and with the property that $I_{(a_1, \ldots, a_n)}$ is to the left of $I_{(b_1, \ldots, b_n)}$ if and only if $(a_1, \ldots, a_n) < (b_1, \ldots, b_n)$ in the lexicographic order. We can define an action of $L_n$ on $\bigcup_{\vec{a} \in \Z^n} I_{\vec{a}}$ by letting $M \in L_n$ send $I_{\vec{a}}$ to $I_{M\vec{a}}$, say in an affine way. Since we chose the $I_{\vec{a}}$ to have dense union in $\R$, these maps extend uniquely to homeomorphisms of the line.

In the examples of nilpotent actions we have seen so far, there have been two basic types of behavior. One is minimality, as in at least one irrational rotation of the circle or at least two independent translations of the line. The other is discrete intervals with dense union being permuted in some way. This was the case for the general construction of nilpotent actions we gave, and the reader can check it also holds for any single homeomorphism of a one-manifold not conjugate to an irrational rotation of the circle. Our main result says that these two behaviors, in some combination, are all we can see for nilpotent actions on one-manifolds.

\begin{restatable}[Structure Theorem]{theorem}{StructureTheorem}
\label{StructureTheorem}
Let $M$ be a one-manifold, and $G \subset \Homeo_+(M)$ be a finitely-generated virtually nilpotent group. There exist (countably many) open sets $I_i$ and $M_i$ such that the following hold:

\begin{itemize}
\item $I_i \cap I_j = M_i \cap M_j = \emptyset$ unless $i = j$, and $I_i \cap M_j = \emptyset$ for all $i, j$.

\item Each $I_i$ and $M_i$ is $G$-invariant.

\item Let $I_{i, j}$ be the open intervals comprising $I_i$. For any $j, j'$, there exists $g \in G$ such that $g(I_{i, j}) = I_{i, j'}$. If $g(I_{i, j}) = I_{i, j}$, then $g|_{I_{i, j}} = id|_{I_{i, j}}$.

\item For any $i$, the action of $G$ on $M_i$ is minimal. If $M = S^1$ and the action of $G$ on $S^1$ is minimal, $G$ is abelian. If some $M_i$ is composed of open intervals $M_{i, j}$ then the group $G_{i, j} = \{g|_{M_{i, j}}\colon g \in G, g(M_{i, j}) = M_{i, j}\}$ is abelian.

\item $\bigcup_i I_i \cup \bigcup_j M_j \subset M$ is dense.
\end{itemize}
\end{restatable}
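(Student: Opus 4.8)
My plan is to induct on the degree of polynomial growth $d(G)$, running a mutual recursion between the line and the circle that exploits two features of nilpotent groups: they are amenable and of subexponential growth, so their actions preserve measures (Plante's theorem produces a $G$-invariant Radon measure for any action on $\R$, and amenability produces a $G$-invariant probability measure for any action on $S^1$), and they have nontrivial center $Z(G)$, so that for any central $z$ the set $\Fix(z)$ is closed and $G$-invariant and can be cut along. Before starting I would reduce to the essential case: a finite-index normal torsion-free nilpotent subgroup $N \leq G$ has the same growth degree, finite-order orientation-preserving maps of $\R$ are trivial while those of $S^1$ are conjugate to rotations, and the dictionary from the introduction lets me pass freely between actions on $\R$ and on $S^1$. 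The base case $d(G) = 0$ (i.e.\ $G$ finite) is then immediate.

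The heart of the matter is recognizing the homogeneous pieces from an invariant measure $\mu$. I would form the development map $D(x) = \mu([x_0, x])$, which is monotone and satisfies $D(gx) = D(x) + \tau(g)$ for the translation (resp.\ rotation) homomorphism $\tau\colon G \to \R$ (resp.\ $\R/\Z$). Three things can happen. If $\mu$ has an atom, the atoms of maximal mass form a finite $G$-invariant set, and cutting $M$ along it reduces to invariant subintervals. If $\mu$ is non-atomic with full support, then $D$ is a homeomorphism conjugating $G$ to a group of translations (rotations); this action is \emph{abelian}, and it is minimal exactly when $\tau(G)$ is dense. These are the pieces $M_i$ on which the action is minimal and abelian, and in particular a minimal action on $S^1$ is conjugate to rotations, so $G$ is abelian there. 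If instead $\supp\mu$ is a Cantor set, its complementary intervals (``gaps'') wander and are permuted by $G$, while $D$ semiconjugates $G$ to a minimal translation (rotation) action; here I would recurse inside a representative gap.

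Two moves drive the induction downward. First, for $g$ in the stabilizer $G_J$ of a gap $J$ one has $\tau(g) = 0$, since $g$ fixes the endpoints of $J$ (which lie in $\supp\mu$); thus $G_J \subseteq \ker\tau$, and when $\tau \neq 0$ the image $\tau(G) \cong \Z^r$ with $r \geq 1$, so by the Bass--Guivarc'h formula $d(\ker\tau) = d(G) - r < d(G)$ and the recursion inside $J$ is on a strictly simpler group. Second, when $\tau \equiv 0$ (the case forcing a global fixed point, the residual obstruction to a finite orbit) I would invoke the center: choose a nontrivial $z \in Z(G)$, cut along the invariant set $\Fix(z)$, and on each complementary interval $J$ — where $z$ is fixed-point-free and hence stabilizes $J$ — apply the introduction's construction to quotient $J$ by $\langle z\rangle$, obtaining a circle carrying an action of $G_J/\langle z\rangle$. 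Since $z$ has infinite order, quotienting by this central $\Z$ drops the growth degree, so this circle action is again strictly simpler and I recurse. On a full-support but non-minimal piece (where $\tau(G)$ is cyclic and the action is genuinely translation-like) I would instead subdivide into fundamental domains: the intervals between consecutive points of a single orbit have trivial stabilizer and dense union, and these become the $I_{i,j}$. Finally I would verify that the pieces produced at every level are $G$-permuted, pairwise disjoint, and of dense union, and assemble them into the families $\{I_i\}$ and $\{M_i\}$.

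The main obstacle I anticipate is the bookkeeping that makes this recursion well-founded and globally $G$-equivariant. The delicate points are: confirming that each downward move strictly lowers $d(G)$ — in particular controlling $d(\ker\tau)$ and $d(G_J/\langle z\rangle)$ through Bass--Guivarc'h, and checking that subgroups such as $G_J$ remain finitely generated nilpotent (which holds since nilpotent groups are Noetherian); and upgrading the semiconjugacy furnished by $\mu$ to an honest conjugacy on the minimal pieces, i.e.\ showing that a stratum declared minimal harbors no wandering subinterval concealing a further splitting. I would also need to check that the locally defined subdivisions patch compatibly across the countably many gaps at all depths, so that the resulting intervals are globally disjoint with dense union, the stabilizer of each $I_{i,j}$ acts trivially, and each minimal stratum $M_{i,j}$ carries an abelian stabilizer-action — this last point being exactly the development-map-to-rotations argument applied one level down.
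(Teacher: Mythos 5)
Your strategy is correct in outline, but it is organized differently from the paper's proof, and the comparison is instructive. Both arguments run an induction on the growth degree $d(G)$ powered by the same two inputs (Plante's invariant Radon measure / the invariant probability measure on $S^1$, and Bass's fact that an infinite-index subgroup of a f.g.\ nilpotent group has strictly smaller growth degree), but they cut the manifold along different invariant sets. The paper splits $\R$ into $\R \setminus \Fix([G,G])$ and $\Int(\Fix([G,G]))$, observes that the stabilizer of a maximal complementary interval has infinite index once global fixed points are removed, and proves ``minimal $\Rightarrow$ abelian'' by noting that $\Fix([G,G])$ is closed, invariant, nonempty and hence dense. This splitting degenerates exactly when $G$ is abelian, which forces the paper's special detour: quotient by a fixed-point-free element to get a $\Z^{k-1}$-action on a circle and recurse there. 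You instead cut along $\supp\mu$ and drive the induction with the translation homomorphism $\tau$: gap stabilizers land in $\ker\tau$, which has infinite index whenever $\tau\neq 0$, and the case $\tau\equiv 0$ is precisely the global-fixed-point case, after which a fresh application of Plante on each complementary interval gives $\tau\neq 0$ again. Your route buys two things: the abelian case needs no special treatment (so the central-element/$\Fix(z)$/quotient-by-$\langle z\rangle$ move you describe is actually never reached --- which is fortunate, since a f.g.\ \emph{virtually} nilpotent group need not have nontrivial center), and on the minimal strata the development map gives the stronger conclusion that the action is conjugate to a group of translations or rotations, not merely abelian. What the paper's route buys is less measure-theoretic bookkeeping: it never has to discuss atoms versus Cantor versus full support.

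Three small corrections so that your plan compiles into a proof. First, $d(\ker\tau) = d(G) - r$ is false in general (for the Heisenberg group and $r=1$ the kernel is $\Z^2$, so $d$ drops from $4$ to $2$, not to $3$); all you need, and all that is true, is $d(\ker\tau) < d(G)$, which follows from $\ker\tau$ having infinite index since $G/\ker\tau$ is a nontrivial torsion-free abelian group. Second, on $\R$ the atoms of an infinite Radon measure need not have a maximal mass and need not form a finite set; take instead the atoms of mass at least $\varepsilon$ for suitable $\varepsilon>0$, which form a nonempty closed discrete $G$-invariant set, and cut along that. Third, as noted, drop the $\Fix(z)$ step: when $\tau\equiv 0$ one has $\supp\mu\subseteq\Fix(G)$, so the correct (and sufficient) move is to cut along $\Fix(G)$ and restart Plante on each complementary interval, where the new $\tau$ is necessarily nonzero.
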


In Figure 1, we show an example of what this could look like for an action of the Heisenberg group $H = \langle f, g, h\colon [f, g] = h \text{ and } [f, h] = [g, h] = id\rangle$ on $\R$. The action of $f$ is by translation, as shown. The elements $g$ and $h$ act minimally on the wavy intervals, while in the other intervals, $h$ acts trivially and a $g$-orbit is depicted. For this action, there is only one $I_1$ and only one $M_1$, as shown. Note that the action of $H$ on $M_1$ is \emph{not} abelian, but its restriction to any of the intervals in $M_1$ is.

There two key nontrivial assertions in our theorem, which depend on fact that finitely-generated virtually nilpotent groups have polynomial \emph{growth} (see below for the definition). The first is that, on a $G$-invariant interval on which the action is minimal, the action is abelian. This assertion holds for any finitely-generated group of sub-exponential growth. However, it is not true for solvable groups. For instance, consider the group of homeomorphisms of $\R$ generated by $f(x) = x + 1$ and $g(x) = 2x$. This action is minimal: the orbit of $0$ contains every dyadic rational. But clearly, the group is not abelian.

The second nontrivial assertion is that, in regions where the action is not minimal, there are no dense orbits. This fails in general for groups not having polynomial growth. Grigorchuk-Maki \cite{G&M} gave an example of a group $G$ of intermediate (super-polynomial, sub-exponential) growth acting faithfully by homeomorphisms of the line. This group action has dense orbits, but also has (a dense set of) orbits that are not dense. We show three of these orbits in Figure 2. All the dots of a given size lie in the same orbit; for clarity, we have shown only part of the densest orbit.

\begin{figure}
  \centering
    \includegraphics[width=4in]{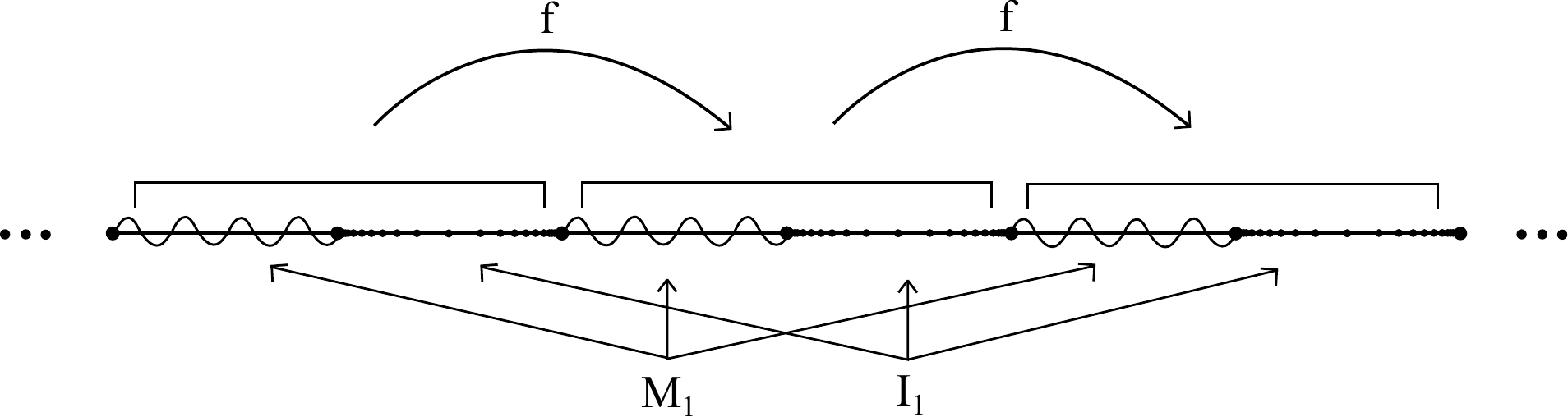}
    \caption{Action of the Heisenberg group on the line}
\vspace{12pt}
\includegraphics[width=3in]{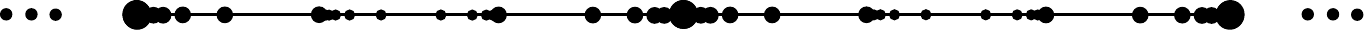}
   \caption{Orbits of the Grigorchuk-Maki group action on the line}
\end{figure}

This second assertion is proved for finitely-generated virtually nilpotent groups by an inductive procedure, looking at finer and finer intervals. The procedure must end after finitely many steps, since these groups have polynomial growth, and each time we restrict to a finer interval the growth degree becomes smaller.

The inductive procedure yielding the desired intervals is a conjugacy invariant: if we have two topologically conjugate actions of a finitely-generated virtually nilpotent group $G$, then the structure of the intervals we get will agree. We have the following immediate corollary:

\begin{corollary}
Let $M$ be a 1-manifold, and $G$ a finitely-generated virtually nilpotent group. Two actions $\phi, \psi\colon G \to \Homeo_+(M)$ are conjugate if and only if there is a homeomorphism of $M$ sending $(I_{i, j})_\phi$ and $(M_{i, j})_\phi$ to $(I_{i, j})_\psi$ and $(M_{i, j})_\psi$ and intertwining the $\phi$ and $\psi$ actions of $G$ on these intervals. In particular, the action of $G$ on $(M_{i, j})_\phi$ by $\phi$ and the action of $G$ on $(M_{i, j})_\psi$ by $\psi$ are conjugate groups of translations whose elements must have the same translation numbers relative to each other.
\end{corollary}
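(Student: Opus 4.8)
The plan is to prove the two directions of the equivalence separately, and then deduce the ``in particular'' statement from the structure of minimal abelian actions.

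For the forward direction, suppose $H \in \Homeo_+(M)$ conjugates $\phi$ to $\psi$, so that $H \circ \phi(g) = \psi(g) \circ H$ for every $g \in G$. The point to exploit is that the decomposition $\{I_i, M_i\}$ furnished by the Structure Theorem is produced by an inductive procedure each of whose steps is phrased purely in terms of the dynamics of the action --- which orbits are dense, on which invariant open sets the action is trivial, and so on. Since $H$ carries $\phi$-invariant sets to $\psi$-invariant sets, minimal sets to minimal sets, and regions of trivial action to regions of trivial action, it must carry the $\phi$-decomposition onto the $\psi$-decomposition stage by stage. Hence $H$ sends each $(I_{i,j})_\phi$ onto some $(I_{i,j})_\psi$ and each $(M_{i,j})_\phi$ onto some $(M_{i,j})_\psi$, and it manifestly intertwines the two actions on these intervals.

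For the backward direction, suppose instead that $H \in \Homeo_+(M)$ sends the $\phi$-intervals onto the $\psi$-intervals and intertwines the two actions there. Fix $g \in G$. The two continuous maps $H \circ \phi(g)$ and $\psi(g) \circ H$ then agree on $\bigcup_{i,j} (I_{i,j})_\phi \cup \bigcup_{i,j} (M_{i,j})_\phi$, which by the last bullet of the Structure Theorem is dense in $M$; by continuity they agree everywhere. Thus $H \circ \phi(g) = \psi(g) \circ H$ for all $g$, so $H$ is a conjugacy.

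For the ``in particular'' statement, fix an interval $M_{i,j}$ and recall that $G_{i,j}$ is abelian and acts minimally there (minimality descends from that of the $G$-action on $M_i$, since a $\phi(g)$ can move a point of $M_{i,j}$ back into $M_{i,j}$ only when it stabilizes $M_{i,j}$). First I would note that this action is free: if $a \in G_{i,j}$ had a fixed point, its fixed-point set would be a nonempty closed set, and it would be $G_{i,j}$-invariant because the group is abelian, so by minimality it would be all of $M_{i,j}$, forcing $a = id$. A free action on an interval of $\R$ is, by H\"{o}lder's theorem, topologically conjugate to an action by translations, giving an injective homomorphism $\tau_{i,j} \colon G_{i,j} \to \R$ (valued in $\R/\Z$ in the circle case) with dense image. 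Applied to $\phi$ and $\psi$ this yields translation actions with homomorphisms $\tau$ and $\tau'$, and the conjugating homeomorphism $h$ above restricts to a conjugacy between them, i.e. $h(x + \tau(g)) = h(x) + \tau'(g)$ for all $g$. By the density of $\tau(G_{i,j})$ and monotonicity of $h$, the assignment $\tau(g) \mapsto \tau'(g)$ extends to a continuous monotone automorphism of $\R$, hence to multiplication by a positive constant $\lambda$; therefore $\tau'(g) = \lambda\,\tau(g)$ for every $g$. Thus the two sets of translation numbers coincide after a global rescaling --- they agree ``relative to each other'' --- as claimed. On the circle the analogous statement holds with $\lambda = 1$, since rotation numbers in $\R/\Z$ are exact conjugacy invariants.

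The main obstacle I anticipate is making rigorous the claim, used in the forward direction, that the inductively constructed decomposition is genuinely a conjugacy invariant: one must verify that at each stage of the induction the objects isolated (invariant open sets, minimal components, trivially-acted intervals) are cut out by conditions preserved under conjugacy, and that the conjugacy $H$ therefore commutes with passage from one stage to the next. Once this invariance is in hand, both directions of the equivalence and the translation-number statement follow from the density and H\"{o}lder-type arguments above.
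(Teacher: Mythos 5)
Your proposal is correct and follows the same route the paper takes: the paper states this corollary as immediate from the observation that the inductive construction of the $I_{i,j}$ and $M_{i,j}$ is a conjugacy invariant, which is exactly your forward direction, while your backward direction (agreement on a dense union plus continuity) and the H\"{o}lder/translation-number argument for the ``in particular'' clause are the standard completions the paper leaves implicit. The one point of care---that the freedom in constructing $\mathcal{I}$ does not obstruct the forward direction---is one you already flag, and it is resolved by taking the $\psi$-decomposition to be the image of the $\phi$-decomposition under the conjugating homeomorphism.
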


\subsection{Application: Smoothing $C^0$ to $C^1$}

Let $M$ be a manifold, and $f\colon M \to M$ a homeomorphism. It is natural to ask, what is the smoothest diffeomorphism of $M$ topologically conjugate to $f$? On surfaces, it is easy to construct homeomorphisms not conjugate to $C^1$ diffeomorphisms. For example, let $f$ have a fixed point $x$, and rotate points faster and faster as one approaches $x$, with the amount of rotation tending to infinity. No matter what conjugacy one applies, there will still be a ``bad'' point with infinitely fast rotation, so the result will not be a $C^1$ diffeomorphism.

In fact, Harrison \cite{Harrison} has shown the following: on any $n$-manifold ($n \neq 1$ or $4$), there exist $C^r$ diffeomorphisms not conjugate to $C^s$ diffeomorphisms for any real numbers $s > r > 0$ (if these are not integers, the fractional parts are interpreted as H\"{o}lder exponents). She shows moreover that if $M$ is compact, such \emph{unsmoothable} diffeomorphisms are dense in $\Diff^r(M)$ in the $C^0$ topology.

In dimension one the possibilities for dynamics, and correspondingly for unsmoothability, are much more limited. The line $\R$ is particularly simple: as we have seen, a homeomorphism $f\colon \R \to \R$ is characterized by its set of fixed points and the direction points in $\R \setminus \Fix(f)$ move. One can easily check, using this, that any homeomorphism of $\R$ is topologically conjugate to a $C^\infty$ diffeomorphism; one simply needs to use gentle enough ``bump functions'' in the intervals of $\R \setminus \Fix(f)$.

The circle $S^1$ is not quite so simple, since there can be nontrivial recurrence. From the point of view of smoothability, there are two possibilities. The first is that the homeomorphism $f\colon S^1 \to S^1$ either has rational rotation number or is conjugate to an irrational rotation. In this case, $f$ is conjugate to a $C^\infty$ diffeomorphism of $S^1$. The second is that $f$ is semi-conjugate, but not conjugate, to an irrational rotation (i.e., it has an exceptional minimal set). In this case, by the following theorem of Denjoy, $f$ cannot be topologically conjugated to $C^{1 + bv}$:

\begin{theorem}[Denjoy \cite{Denjoy}]
If $f\colon S^1 \to S^1$ is a $C^{1 + bv}$ diffeomorphism with irrational rotation number $\theta$, then $f$ is topologically conjugate to the rotation $R_\theta$.
\end{theorem}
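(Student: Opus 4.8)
The plan is to reduce the statement to the assertion that $f$ has no wandering intervals, and then to rule those out using the bounded-variation hypothesis. By Poincaré's theory recalled above, an orientation-preserving homeomorphism $f$ of $S^1$ with irrational rotation number $\theta$ is topologically semi-conjugate to $R_\theta$: there is a continuous, monotone, degree-one map $h\colon S^1 \to S^1$ with $h \circ f = R_\theta \circ h$. Such an $h$ is a genuine conjugacy exactly when it is strictly monotone, i.e. collapses no arc to a point; and the arcs it collapses are precisely the \emph{wandering intervals} of $f$, namely intervals $J$ whose iterates $\{f^n(J)\}_{n \in \Z}$ are pairwise disjoint. Thus it suffices to prove that a $C^{1+bv}$ diffeomorphism with irrational rotation number admits no wandering interval.

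Suppose, for contradiction, that $J$ is a wandering interval with an endpoint $a$. Since the $f^n(J)$ are pairwise disjoint in $S^1$, we have $\sum_{n \in \Z}|f^n(J)| \le 1$, so $|f^n(J)| \to 0$ as $n \to \pm\infty$. Write $V = \mathrm{Var}(\log f')$, finite because $f$ is $C^{1+bv}$ and $f' > 0$; this is the only point where the bounded-variation hypothesis enters, and it is essential, since for merely $C^1$ maps Denjoy's own counterexamples produce wandering intervals. The heart of the argument is a distortion estimate for $(f^{q})'$ along orbits, where $q = q_n$ runs through the denominators of the continued-fraction expansion of $\theta$.

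The key input is combinatorial: for $q = q_n$, the theory of closest returns for circle maps with irrational rotation number guarantees that the iterates $\Delta, f(\Delta), \ldots, f^{q-1}(\Delta)$ of the arc $\Delta = [a, f^{q}(a)]$ are pairwise disjoint. Since the total variation of $\log f'$ over any finite family of pairwise disjoint arcs is at most $V$, applying this to the telescoped identity
\[
\log (f^{q})'(a) + \log (f^{-q})'(a) = \sum_{i=0}^{q-1}\Bigl[\log f'(f^{i}(a)) - \log f'(f^{i-q}(a))\Bigr]
\]
yields the Denjoy inequality $\bigl|\log (f^{q})'(a) + \log (f^{-q})'(a)\bigr| \le V$, uniformly in $n$. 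The same principle, applied now to the iterates of $J$ itself (which are disjoint automatically, as $J$ is wandering), bounds the distortion of $f^{\pm q}$ across $J$ by $e^{V}$, so that $|f^{\pm q}(J)| \ge e^{-V}(f^{\pm q})'(a)\,|J|$.

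Combining these estimates gives
\[
|f^{q}(J)|\cdot|f^{-q}(J)| \;\ge\; e^{-2V}\,(f^{q})'(a)\,(f^{-q})'(a)\,|J|^{2} \;\ge\; e^{-3V}\,|J|^{2} \;>\; 0,
\]
a lower bound independent of $n$. But the left-hand side tends to $0$ as $q = q_n \to \infty$, since $J$ is wandering, which is the desired contradiction. Hence $f$ has no wandering interval, $h$ is a homeomorphism, and $f = h^{-1} R_\theta\, h$ is conjugate to $R_\theta$. I expect the main obstacle to be the combinatorial disjointness statement for the iterates of $\Delta$: this is where the arithmetic of the rotation number genuinely enters, and it is exactly the ingredient that forces us to work with the special return times $q_n$ rather than arbitrary iterates.
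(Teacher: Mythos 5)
Your argument is correct: it is the classical Denjoy proof (semi-conjugacy from Poincar\'{e} theory, reduction to the nonexistence of wandering intervals, the Denjoy inequality $|\log (f^{q_n})'(x) + \log (f^{-q_n})'(x)| \le \mathrm{Var}(\log f')$ via the disjointness of the first $q_n$ iterates of $[x, f^{q_n}(x)]$, and the contradiction with $|f^{\pm q_n}(J)| \to 0$). Note, however, that the paper does not prove this statement at all --- it is quoted as an attributed classical theorem of Denjoy and used as a black box --- so there is no in-paper proof to compare against. The only cosmetic point worth tidying is an indexing slip: the arcs whose variations you sum in the telescoped identity are $f^{i-q}(\Delta)$ for $0 \le i < q$, i.e.\ the iterates $f^{-q}(\Delta), \ldots, f^{-1}(\Delta)$ rather than $\Delta, \ldots, f^{q-1}(\Delta)$; these are pairwise disjoint by the same closest-return argument applied at the base point $f^{-q}(a)$, so nothing is lost.
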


In this paper, we show that a homeomorphism of the second type is still always conjugate to a $C^{1 + \alpha}$ diffeomorphism for any $\alpha < 1$. Thus there is a threshold of regularity for circle diffeomorphisms; one class of diffeomorphisms can be smoothed only up to this threshold, while the other can be smoothed past it. More generally, we prove the following, with the help of Theorem \ref{StructureTheorem}:

\begin{theorem}
\label{Smoothability}
Let $M$ be a 1-manifold, and let $G \subset \Homeo_+(M)$ be a finitely-generated, virtually nilpotent subgroup. Then there exists a homeomorphism $\psi\colon M \to M$ such that $\psi G \psi^{-1} \subset \Diff_+^{1 + \alpha}(M)$ for any $\alpha < 1/d$, where $d$ is the degree of polynomial growth of $G$.
\end{theorem}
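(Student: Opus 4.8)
The plan is to use the Structure Theorem (Theorem~\ref{StructureTheorem}) to reduce the problem to a local smoothing construction on the building-block intervals, and then to build a single conjugating homeomorphism $\psi$ by reassigning the lengths of these blocks according to word length in $G$. Concretely, Theorem~\ref{StructureTheorem} exhibits $M$ as the closure of a dense, $G$-invariant union of disjoint open intervals of two types: the blocks comprising the $I_i$, which are merely permuted by $G$ with trivial stabilizer action, and the blocks comprising the $M_i$, on which a stabilizer acts minimally as an abelian group. The complement $R$ of this union is a closed, nowhere-dense, $G$-invariant set, and as a design choice I would arrange for every element of $G$ to have derivative exactly $1$ at the points of $R$ (the natural parabolic/identity normalization for these wandering-interval constructions). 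If each piece is realized as a $C^{1+\alpha}$ diffeomorphism whose derivative tends to $1$ at the endpoints lying in $R$, with uniformly bounded H\"{o}lder constants, then the glued-together maps lie in $\Diff^{1+\alpha}_+(M)$. Thus everything reduces to (i) a length assignment making each generator smooth with the right exponent, and (ii) checking that the pieces match across $R$.

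The core estimate is as follows. Fix a finite generating set and let $|g|$ denote word length; I would give the block $I_g$ indexed by $g \in G$ a length $\ell(g) \sim |g|^{-p}$ for a parameter $p$ to be chosen. Two facts drive everything. First, since $G$ has polynomial growth of degree $d$, the sphere $\{|g| = n\}$ has cardinality $\sim n^{d-1}$, so the total length $\sum_g \ell(g) \sim \sum_n n^{d-1} n^{-p}$ is finite precisely when $p > d$, which is what lets the construction fit inside a bounded manifold. Second, for any fixed $\gamma \in G$ one has $\bigl| |\gamma g| - |g| \bigr| \le |\gamma|$, so the map realizing $\gamma$ carries $I_g$ onto $I_{\gamma g}$ with average derivative $\ell(\gamma g)/\ell(g) = (|\gamma g|/|g|)^{-p} = 1 + O(|\gamma|/|g|)$. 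Since the derivative must return to $1$ at the two endpoints of $I_g$ (which lie in $R$), the interior transition forces a H\"{o}lder-$\alpha$ derivative of size $\sim (|\gamma|/|g|)\big/\ell(g)^{\alpha} = |\gamma|\,|g|^{\,p\alpha - 1}$, and this stays bounded over all $g$ exactly when $p\alpha \le 1$. Given any target $\alpha < 1/d$, I would therefore choose $p \in (d, 1/\alpha]$, which is nonempty precisely because $\alpha < 1/d$; then summability ($p>d$) and the H\"{o}lder bound ($\alpha \le 1/p$) hold simultaneously, so every element of $G$ is realized in $C^{1+\alpha}$ with a common exponent (though with element-dependent H\"{o}lder constants).

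On the permutation pieces $I_i$ this is immediate: arranging the blocks in the $G$-invariant order supplied by Theorem~\ref{StructureTheorem} and identifying them by maps that are affine away from a smoothing near the endpoints, exactly as in the Farb--Franks construction~\cite{F&F}, realizes the action, and the length assignment above upgrades it to $C^{1+\alpha}$. The minimal pieces $M_i$ require a companion construction, since there are no wandering intervals to permute. If minimality occurs on all of $S^1$, then by Theorem~\ref{StructureTheorem} the action is abelian and minimal, hence conjugate to a group of rotations and already $C^\infty$; and a minimal translation action on all of $\R$ is affine, so also $C^\infty$. The genuinely new case is a bounded interval $M_{i,j}$ whose endpoints lie in $R$: here the stabilizer $G_{i,j}$ is abelian and conjugate to a rank-$k$ group of translations with $k \le d$, and it must be realized by commuting parabolic-type maps fixing the endpoints. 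I would smooth this by a word-length bookkeeping on $\Z^k$ parallel to the one above, yielding $C^{1+\alpha}$ for $\alpha < 1/k$, hence in particular for $\alpha < 1/d$, and with derivative $1$ at the endpoints so that the piece matches the ambient normalization on $R$.

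The main obstacle is the honest treatment of these minimal pieces together with the global gluing. Unlike the permutation pieces, a minimal abelian action has dense orbits, so it cannot be encoded by a combinatorial permutation of intervals carrying the discrete order type of the group; realizing two or more commuting parabolics on a single bounded interval in $C^{1+\alpha}$, and no better, is precisely the phenomenon behind Kopell's lemma and the Denjoy threshold, and obtaining the uniform H\"{o}lder control there is delicate. Equally delicate is ensuring that a \emph{single} homeomorphism $\psi$ simultaneously conjugates every generator into $C^{1+\alpha}$ while respecting all relations of $G$: one must choose the within-block smoothings on neighboring intervals compatibly, so that the global derivative is genuinely H\"{o}lder-$\alpha$ across the nowhere-dense set $R$ with constants independent of the block, and so that the maps realizing the generators actually commute in the required nilpotent fashion rather than merely being individually smooth.
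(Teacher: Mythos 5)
Your treatment of the permuted blocks $I_{i,j}$ matches the paper's: lengths assigned by word length (the paper takes $\ell \sim (2^{|i|\alpha}+|I_{i,j}|_S)^{-1/\alpha}$, i.e.\ your $p=1/\alpha$), summability from the sphere count $\#\{|g|=n\}\sim n^{d-1}$ with $1/\alpha>d$, and the H\"{o}lder bound from $\bigl||\gamma g|-|g|\bigr|\le|\gamma|$ via the Yoccoz-type equivariant maps $\phi_{a,b}$ and Navas's patching lemma. Up to that point the proposal is essentially the paper's argument.

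The genuine gap is in the minimal blocks $M_{i,j}$, and it is not merely the ``delicate uniform control'' you defer at the end --- it is a missing quantitative ingredient without which the estimate fails. The difficulty is not smoothing the abelian stabilizer $G_{i,j}$ on a single interval (a fixed finitely generated group of parabolics $\phi_{\ell}T_a\phi_{\ell}^{-1}$ on one interval is as smooth as you like); it is that a generator $g_n$ of $G$ carries $M_{i,j}$ to $M_{i,j'}$, and the only way to realize this compatibly with the group law is $g_n = g_{i,j'}\,h\,g_{i,j}^{-1}$ with $h = g_{i,j'}^{-1}g_ng_{i,j}$ in the stabilizer of $M_{i,0}$. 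This $h$ has $G$-word length about $2|M_{i,j}|_S$, but what controls the resulting translation amount $a$ (and hence the second derivative of $\phi_{b'}T_a\phi_b^{-1}$ relative to the shrinking lengths $b,b'\sim|M_{i,j}|_S^{-1/\alpha}$) is the word length of $h$ \emph{in the stabilizer subgroup}, which can be polynomially larger. The paper closes this with the subgroup distortion bound for nilpotent groups: $\dist(H,G)(n)\lesssim n^{k}$ with $k$ the nilpotence class, together with the inequality $k<d$, so that $a\lesssim c_i|M_{i,j}|_S^{k}$ and the exponent comparisons $1/\alpha-1>k$ and $2k-1/\alpha<1/\alpha-1$ go through. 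Your proposal never invokes distortion, and your stated exponent $\alpha<1/k$ with $k$ the \emph{rank} of the abelian stabilizer is the wrong invariant --- the rank of $G_{i,j}$ plays no role in the estimate, while the nilpotence class of $G$ is exactly what does. Without the distortion estimate there is no bound on $a$ in terms of $|M_{i,j}|_S$, and the uniform $C^{\alpha}$ bound on the derivatives over the infinitely many minimal blocks does not follow.
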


This result strengthens the following theorem of Farb and Franks \cite{F&F}:

\begin{theorem}
Let $M$ be a 1-manifold. Every finitely-generated, torsion-free nilpotent group is isomorphic to a subgroup of $\Diff_+^1(M)$.
\end{theorem}

Farb and Franks's theorem says that for finitely-generated torsion-free nilpotent groups there \emph{exist} faithful actions; our theorem says that \emph{every} action can be conjugated to $C^1$ (and a little more). The following comes easily from our theorem:

\begin{corollary}
\label{Corollary}
Let $G \subset \Homeo_+(S^1)$ be finitely-generated and virtually nilpotent. Given any fixed generating set for $G$, it is possible to conjugate $G$ to a subgroup of $\Diff_+^1(S^1)$ such that the generators are as $C^1$ close to rotations as desired. The same is true for $G \subset \Homeo_+(\R)$, with ``rotations'' replaced by ``the identity.''
\end{corollary}

This gives another method for proving Theorem B from \cite{Navas4}.

\subsection{Remarks and Questions}
In this section, we explore the extent to which our theorem is sharp. We first observe that we cannot in general conjugate the group action to $C^2$, even if $M = \R$. To see this, take a homeomorphism $f$ of the circle with exceptional minimal set. Lift $f$ to a homeomorphism $\tilde{f}\colon \R \to \R$ commuting with translation by 1, $T_1$. The group generated by $\tilde{f}$ and $T_1$ cannot be conjugated to $C^2$.

Next, we note that our theorem cannot be strengthened to include the class of solvable groups. It is shown in \cite{C&C} and \cite{G&L} that there exist actions of the Baumslag-Solitar group $BS(1, 2) = \langle a, b \colon aba^{-1} = b^2\rangle$ on $S^1$ which cannot be conjugated to $C^1$. This example makes use of the fact that $b$ is exponentially distorted (see below) in $BS(1, 2)$. Bonatti, Monteverde, Navas, and Rivas \cite{BMNR} give a large class of solvable examples (including this one) that cannot be conjugated to $C^1$.

By Gromov's Theorem \cite{Gromov2}, our results apply to any group of polynomial growth. It would be interesting to understand actions of groups with sub-exponential growth. In particular, it seems natural to ask:

\begin{question}
Is the Grigorchuk-Maki group action on the line conjugate to a $C^1$ action?
\end{question}

Navas \cite{Navas2} has shown that it is \emph{semi-conjugate} to a $C^1$ action: by adding ``buffer'' regions on which the action is trivial, he constructs a faithful $C^1$ action of this group.

Another necessary assumption in our results is finite generation of the group. It is easy to construct uncountable abelian groups of homeomorphisms of $\R$ which are not conjugate to Lipschitz, let alone $C^1$. On the other hand, a remarkable result of Deroin, Kleptsyn, and Navas \cite{DKN} says that any countable group of homeomorphisms in dimension one is conjugate to a group of bi-Lipschitz homeomorphisms. This leads us to the following question:

\begin{question}
If $M$ is a 1-manifold, and $G \subset \Homeo_+(M)$ is a countable nilpotent (or even abelian) subgroup, is $G$ conjugate to a subgroup of $\Diff_+^1(M)$?
\end{question}






A final line of questions involves the optimal regularity for finitely-generated nilpotent actions. We show in this paper that any finitely-generated virtually nilpotent subgroup $G \subset \Homeo_+(M)$ is conjugate to a subgroup of $\Diff_+^{1 + \alpha}(M)$ for any $\alpha$ less than the degree of polynomial growth of $G$. This leaves open the following questions (the first one due to Navas):

\begin{question}
For a finitely-generated virtually nilpotent group $G$, what is the smoothest faithful action of $G$ on [0, 1]? What is the least smoothable action?
\end{question}

Let $N_n$ ($n \geq 3$) be the group of $n \times n$ upper-triangular matrices with 1s on the diagonal. Farb and Franks \cite{F&F} exhibited a $C^1$ action of this group on $[0, 1]$; Castro, Jorquera, and Navas \cite{CJN} showed that this action can be smoothed to any differentiability class less than $C^{1 + \frac{2}{(n - 1)(n - 2)}}$ (and subsequently Navas \cite{Navas3} showed $C^{1 + \frac{2}{(n - 1)(n - 2)}}$ is not possible). Note that the group $N_n$ has growth degree $d(N_n) = \frac{(n - 1)(n)(n + 1)}{6}$, so $1/d(N_n) < \frac{2}{(n - 1)(n - 2)}$. Thus, for these groups one can always achieve greater smoothness than our results suggest. In fact, Jorquera \cite{Jorquera} showed that there exist groups of arbitrarily high nilpotence degree that embed in $\Diff_+^{1 + \alpha}([0, 1])$ for any $\alpha < 1$.

\vspace{12pt}
\noindent\textbf{Acknowledgments.} I would like to thank Andr\'{e}s Navas for referring me to several important articles, including the one of Cantwell-Conlon \cite{C&C} and the one of Guelman-Liousse \cite{G&L}, for pointing out Corollary \ref{Corollary} and its relation to his article \cite{Navas4}, and for his enthusiasm for this research. I would like to thank Tobias Hartnick and Uri Bader for helpful conversations. I am grateful for the support of Uri Shapira and a fellowship at the Technion from the Lady Davis Foundation.

\section{Proof of Theorem \ref{StructureTheorem} and Theorem \ref{Smoothability}}

The next two results are involved in showing that, for nilpotent group actions, minimal implies abelian. The following proposition is easy. See e.g. \cite{Navas}.

\begin{proposition}
If $G \subset \Homeo_+(S^1)$ is any amenable subgroup (not necessarily finitely-generated), there a $G$-invariant Borel probability measure $\mu$. Therefore, rotation number is a homomorphism on $G$, so each element of $[G, G]$ has a fixed point, and for any $x\in \supp(\mu)$, $x \in \cap_{g \in [G, G]} \Fix(g)$ -- in particular, $[G, G]$ has a global fixed point.
\end{proposition}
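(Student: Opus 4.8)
The plan is to produce the invariant measure first, and then bootstrap everything else from it. Consider the space $\mathcal{P}(S^1)$ of Borel probability measures on $S^1$, which is a nonempty, compact, convex subset of the dual $C(S^1)^*$ in the weak-$*$ topology (compactness by Banach--Alaoglu). The group $G$ acts on $\mathcal{P}(S^1)$ by pushforward, and this action is by affine homeomorphisms. Since $G$ is amenable, it has the fixed-point property for affine actions on nonempty compact convex sets, so there is a $G$-fixed point $\mu \in \mathcal{P}(S^1)$; this is the desired $G$-invariant measure.

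Next I would define the rotation number using $\mu$. Let $\widetilde{G}$ be the preimage of $G$ in the universal cover group of $\Homeo_+(S^1)$, a central extension of $G$ by $\Z$. For $\tilde g \in \widetilde{G}$ covering $g$, the displacement $\psi_{\tilde g}(y) := \tilde g(y) - y$ is $1$-periodic, hence descends to a continuous function on $S^1$, so we may set $\tau(\tilde g) := \int_{S^1} \bigl(\tilde g(x) - x\bigr)\, d\mu(x)$. The homomorphism property follows from invariance: writing $\tilde g\tilde h(x) - x = \psi_{\tilde g}(\tilde h(x)) + (\tilde h(x) - x)$ and using that $\mu$ is invariant under $h$ to replace $\int \psi_{\tilde g}(\tilde h(x))\, d\mu$ by $\int \psi_{\tilde g}(x)\, d\mu$, one obtains $\tau(\tilde g \tilde h) = \tau(\tilde g) + \tau(\tilde h)$. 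Since $\tau$ sends the central $\Z$ isomorphically to $\Z \subset \R$, it descends to a homomorphism $\rho\colon G \to \R/\Z$, the rotation number.

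Because $\R/\Z$ is abelian, $[G,G] \subseteq \ker\rho$, so every $g \in [G,G]$ has rotation number $0$ and hence (by Poincar\'{e}) a fixed point. The heart of the matter --- and the step I expect to be the main obstacle to state cleanly --- is upgrading this to $\supp(\mu) \subseteq \Fix(g)$ for each such $g$. Here I would use a wandering-interval argument. The complement $S^1 \setminus \Fix(g)$ is a countable union of open ``gaps,'' on each of which $g$ moves points monotonically toward one endpoint. On a gap $(a,b)$, pick $c \in (a,b)$; the half-open intervals $[g^n(c), g^{n+1}(c))$, $n \in \Z$, are pairwise disjoint, partition $(a,b)$, and all have equal $\mu$-measure by invariance. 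Since $\mu$ is finite, this common measure must be $0$, so $\mu((a,b)) = 0$; summing over the countably many gaps gives $\mu(S^1 \setminus \Fix(g)) = 0$. As $\Fix(g)$ is closed of full measure, $\supp(\mu) \subseteq \Fix(g)$. Intersecting over all $g \in [G,G]$ yields $\supp(\mu) \subseteq \bigcap_{g \in [G,G]} \Fix(g)$, and since $\supp(\mu) \neq \emptyset$, this intersection is nonempty, producing the global fixed point.
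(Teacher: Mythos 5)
Your proof is correct. The paper does not actually write out an argument for this proposition --- it labels it easy and cites Navas's book --- and your argument (invariant measure from the fixed-point characterization of amenability, rotation number as $\int (\tilde g(x)-x)\,d\mu$ being a homomorphism by invariance, and the equal-measure fundamental domains $[g^n(c), g^{n+1}(c))$ in each gap forcing $\mu$ to vanish off $\Fix(g)$) is precisely the standard one found in that reference.
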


Define a \emph{Radon measure} on $\R$ to be a Borel measure which is finite on compact subsets (note that the other usual requirements for a Radon measure are automatically satisfied on $\R$). The following result is due to Plante \cite{Plante}; see also \cite{Navas}, Theorem 2.2.39.

\begin{theorem}
\label{RadonMeasure}
If $G$ is a finitely-generated virtually nilpotent subgroup of $\Homeo_+(\R)$, there is a $G$-invariant Radon measure on the line. Therefore, translation number is a homomorphism on $G$, so each element of $[G, G]$ has a fixed point, and for any $x\in \supp(\mu)$, $x \in \cap_{g \in [G, G]} \Fix(g)$ -- in particular, $[G, G]$ has a global fixed point.
\end{theorem}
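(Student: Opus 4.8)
The plan is to produce the $G$-invariant Radon measure by an averaging (Følner) argument, exploiting the polynomial (hence subexponential) growth of $G$. First I would fix a finite symmetric generating set $S$ and let $B_n$ denote the ball of radius $n$ in the word metric. Since $G$ is finitely-generated virtually nilpotent, it has polynomial growth by Gromov's theorem, so $|B_n|$ grows polynomially; in particular the sequence $B_n$ is a Følner sequence, meaning $|S B_n \triangle B_n|/|B_n| \to 0$ along a suitable subsequence. Concretely, because $\liminf |B_{n+1}|/|B_n| = 1$ for subexponential growth, we can choose $n_k$ with $|B_{n_k + 1}|/|B_{n_k}| \to 1$, which makes the boundary negligible compared to the bulk.

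Next I would manufacture a candidate measure. Pick any point $x_0 \in \R$ and any compactly supported reference measure, or more simply start from the Dirac-type data at $x_0$, and form the averaged measures
\[
\mu_k = \frac{1}{|B_{n_k}|} \sum_{g \in B_{n_k}} g_* \delta_{x_0}.
\]
The difficulty on the line rather than the circle is that there is no ambient finite measure to average against and no compactness of the space of probability measures, so these $\mu_k$ need not converge and could ``escape to infinity.'' I would therefore not normalize by total mass but instead normalize to keep the mass on a fixed reference interval bounded: rescale so that $\mu_k([a,b]) = 1$ for a chosen interval $[a,b]$ meeting the orbit, and then extract a weak-$*$ limit on compact sets. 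The key point is to show this limit is a nonzero Radon measure (finite on compacts), which is where one must rule out both vanishing and blow-up of local mass; the Følner property controls exactly this by bounding how the $g_*\delta_{x_0}$ mass can redistribute under one more application of a generator.

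The heart of the argument, and the step I expect to be the main obstacle, is proving invariance of the limit: for each generator $s \in S$ and each compactly supported continuous test function $\varphi$, one estimates
\[
\left| \int \varphi \, d(s_*\mu_k) - \int \varphi \, d\mu_k \right| \le \frac{\|\varphi\|_\infty}{|B_{n_k}|} \, \bigl| s B_{n_k} \triangle B_{n_k} \bigr|,
\]
and the Følner condition forces the right-hand side to zero. The subtlety is that this must be combined with the local-normalization scheme so that the error does not get amplified by the rescaling; this requires a uniform lower bound on the normalizing masses, which is really where subexponential growth (not merely amenability) is doing the work, since it guarantees the boundary-to-bulk ratio vanishes without needing an ambient invariant probability measure. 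Once $s_*\mu = \mu$ for all generators $s$, it holds for all of $G$, giving the desired invariant Radon measure $\mu$.

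Finally, the ``therefore'' clauses follow formally. Given the invariant Radon measure $\mu$, the translation number $g \mapsto \tau(g)$ (defined via $\mu$ as the signed $\mu$-mass displacement) is a homomorphism $G \to \R$: invariance of $\mu$ makes the cocycle defining $\tau$ a coboundary, so it is additive, and additivity is exactly the homomorphism property. Since $\R$ is abelian, the commutator subgroup $[G,G]$ lies in the kernel, so every $g \in [G,G]$ has $\tau(g) = 0$; an element of $\Homeo_+(\R)$ with zero translation number relative to an invariant Radon measure must fix every point of $\supp(\mu)$ (otherwise it would move $\mu$-mass monotonically, contradicting $\tau(g)=0$ together with invariance). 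Hence every point of $\supp(\mu)$ is fixed by all of $[G,G]$, giving $\supp(\mu) \subset \bigcap_{g \in [G,G]} \Fix(g)$ and in particular a global fixed point for $[G,G]$.
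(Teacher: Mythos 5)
First, note that the paper does not prove this statement at all: it is quoted as a theorem of Plante, with a pointer to Navas's book (Theorem 2.2.39), so there is no internal proof to match your argument against. Judged on its own, your averaging strategy is the natural first attempt, but it has a genuine gap exactly at the step you flag and then wave away. The F\o lner property gives $|sB_{n_k}\triangle B_{n_k}|/|B_{n_k}|\to 0$, which controls the invariance defect of the \emph{probability-normalized} averages; but on the line those averages typically converge to zero on every compact set (equivalently, to a measure supported on $\{\pm\infty\}$ in the two-point compactification), so you are forced into the local renormalization $\mu_k([a,b])=1$. After renormalizing, the invariance error for a test function supported in a compact $K$ is bounded by $\|\varphi\|_\infty\cdot\#\{g\in sB_{n_k}\triangle B_{n_k}\colon g(x_0)\in K\}\,/\,\#\{g\in B_{n_k}\colon g(x_0)\in[a,b]\}$, and the denominator can be of strictly smaller order than $|B_{n_k}|$: for $\Z$ acting by translation it is bounded while $|B_n|\sim 2n$, and for two independent translations generating $\Z^2$ it is of order $n$ while $|B_n|\sim n^2$. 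So the "boundary-to-bulk" ratio you invoke is the wrong ratio, and subexponential growth does not by itself supply the needed uniform lower bound on the normalizing masses. What actually saves these examples is that the new orbit points contributed by $sB_{n_k}\triangle B_{n_k}$ do not concentrate in $[a,b]$ -- an equidistribution-type statement that is the real content of the theorem and is not a formal consequence of the F\o lner condition. (The same issue afflicts the claim that the renormalized limits are nonzero and locally finite.)

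For comparison, the proofs in the literature are structured differently precisely to avoid this: Plante's original argument runs a careful analysis of the growth of the orbit $\{g(x_0)\colon |g|_S\le n\}$ rather than of the group, and the proof in Navas's book proceeds by the dichotomy that either the action has ``crossed elements,'' in which case a positive ping-pong produces a free sub-semigroup on two generators (forcing exponential growth), or it has none, in which case the action is semi-conjugate to a group of translations and the invariant Radon measure is exhibited directly. Your final paragraph is fine: granted the invariant Radon measure $\mu$, the translation number $\tau(g)=\pm\mu([x,g(x)))$ is well defined and additive, $[G,G]\subset\ker\tau$, and if $\tau(g)=0$ and $y\in\supp(\mu)$ with $g(y)\ne y$ then $\mu((g^{-1}(y),g(y)))=0$ contradicts $y\in\supp(\mu)$; so the deduction of the global fixed point for $[G,G]$ is correct. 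The gap is solely in the construction of $\mu$.
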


Note that Theorem \ref{RadonMeasure} holds for any finitely-generated group of sub-exponential growth (for the definition of growth, see below), but does not hold for solvable groups in general.

It is an easy fact that for any $M$ and $G \subset \Homeo(M)$, $\Fix([G, G])$ is a $G$-invariant set. We can see this as follows. Let $x \in \Fix[G, G]$ and $g \in G$; we want to show that for any $h \in [G, G]$, $g(x) \in \Fix(h)$. But $[G, G]$ is a normal subgroup of $G$, so $g^{-1}hg \in [G, G]$; thus, $g^{-1}hg(x) = x$, so $hg(x) = g(x)$, as desired.

Before proving Theorem \ref{StructureTheorem}, we need the following notions. Let $f, g\colon \N \to \N$ be monotone increasing. We will write $f \lesssim g$ if there exists a constant $0 < C \in \N$ such that $f(n) \leq Cg(Cn)$ for all $n \in \N$. We will write $f \sim g$ if $f \lesssim g$ and $g \lesssim f$. This defines an equivalence relation. If $[f]$ and $[g]$ are equivalence classes, we may write $[f] \leq [g]$ or $[f] = [g]$ if $f \lesssim g$ or $f \sim g$, respectively. By abuse of notation, $[f] = g$ will mean $[f] = [g]$.

For the moment, let $G$ be any finitely-generated group. Let $S = \{g_1, \ldots, g_k\}$ be a symmetric finite generating set for $G$. This enables us to define the \emph{norm} $|g|_S$ for $g \in G$ as $$|g|_S = \min\{n \geq 0\colon \text{there exist } g_{i_1}, \ldots, g_{i_n} \in S \text{ such that } g = g_{i_1}\cdots g_{i_n}\}.$$

We define the \emph{growth function} of $G$ with respect to the generating set $S$ to be $$\mathscr{G}_{(G, S)}(n) = \#\{g \in G\colon |g|_S \leq n\}.$$ Though this function depends on the choice of generating set, if $S'$ is any other choice we will have $\mathscr{G}_{(G, S)}(n) \sim \mathscr{G}_{(G, S')}(n)$, so we can talk unambiguously about the equivalence class of these, which we will denote $\mathscr{G}_G(n)$.

It is immediate that if $H$ is a quotient of $G$, $\mathscr{G}_H(n) \leq \mathscr{G}_G(n)$. It is also easy to see that if $H \subset G$ is a finitely-generated subgroup then $\mathscr{G}_H(n) \leq \mathscr{G}_G(n)$. We say that $G$ has \emph{polynomial growth} if for some integer $d(G)$, $\mathscr{G}_G(n) = n^{d(G)}$.

Finitely-generated virtually nilpotent groups have polynomial growth. We have the following theorem, apparently derived independently by Guivarc'h \cite{Guiva}, Bass \cite{Bass}, and others:

\begin{theorem}
Let $N$ be a finitely-generated nilpotent group, and $N^i$ the $i^{th}$ subgroup in the lower central series for $N$ (with $N^1 = N$). Let $r_i = rank(N^i/N^{i + 1})$ (the torsion-free rank). Then $d(N) = \Sigma_{i \geq 1} i\cdot r_i$.
\end{theorem}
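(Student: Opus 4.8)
The plan is to compute $\mathscr{G}_N(n)$ directly, by counting group elements in coordinates adapted to the lower central series, and to show the count is $\sim n^{\sum_{i \ge 1} i\, r_i}$. Fix the lower central series $N = N^1 \supseteq N^2 \supseteq \cdots \supseteq N^{c+1} = \{e\}$, where $c$ is the nilpotency class. For each $i$, choose $a_{i,1}, \ldots, a_{i,r_i} \in N^i$ whose images form a basis of the free part of the finitely-generated abelian group $N^i/N^{i+1}$, together with finitely many elements mapping onto the torsion part. Taking $S$ to be the symmetrization of the union of all these gives a generating set, and the standard collection process shows every $g \in N$ has a normal form $g = \prod_{i,j} a_{i,j}^{e_{i,j}}$ (in a fixed order, with torsion exponents bounded), where the integer tuple $(e_{i,j})$ is essentially unique. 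The torsion factors in each $N^i/N^{i+1}$ are finite, so they contribute only a bounded multiplicative constant and may be ignored under $\sim$; I suppress them below. Thus $\mathscr{G}_N(n)$ is comparable to the number of tuples $(e_{i,j})$ realized by elements of norm at most $n$, and everything reduces to the relation between $|g|_S$ and the sizes of the coordinates.

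The upper bound rests on the estimate that a coordinate's depth governs its growth rate:
\[
|g|_S \le n \ \Longrightarrow\ |e_{i,j}| \le C\, n^{i} \quad \text{for all } i,j,
\]
for a constant $C$ depending only on $N$ and $S$. I would prove this by induction on the class $c$ (equivalently, by analyzing the collection process): a length-$\le n$ word in $S$ has its depth-$1$ coordinates bounded linearly in $n$, and each commutation move needed to reach normal form creates commutators of strictly higher weight whose coordinates are products of the already-bounded lower coordinates, giving the degree-$i$ bound at depth $i$. Granting this, the number of admissible tuples is at most $\prod_{i}(2Cn^{i}+1)^{r_i} \lesssim n^{\sum_i i\, r_i}$, so $\mathscr{G}_N(n) \lesssim n^{\sum_i i\, r_i}$, i.e.\ $d(N) \le \sum_{i \ge 1} i\, r_i$.

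For the matching lower bound I must show these tuples are achieved cheaply: any tuple with $|e_{i,j}| \le c_0\, n^{i}$ is realized by some $g$ with $|g|_S \le n$. The engine is bilinearity of commutators modulo higher terms, $[x^{a}, y^{b}] \equiv [x,y]^{ab} \pmod{N^{\mathrm{wt}+1}}$, iterated $i$ times: a weight-$i$ generator $a_{i,j}$ is a basic commutator in $S$, and taking that same commutator of $\lceil m^{1/i}\rceil$-th powers of its entries yields $a_{i,j}^{m}$ up to an error in $N^{i+1}$, using only $\approx m^{1/i}$ letters. One then clears the deeper error terms by downward induction on depth (each correction is a cheaper-per-unit power of a still-higher-weight generator), so the full tuple is realized with total word length $\lesssim \max_i |e_{i,j}|^{1/i} \lesssim n$. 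Distinct tuples give distinct elements by essential uniqueness of the normal form, so $\mathscr{G}_N(n) \gtrsim \prod_i n^{i r_i} = n^{\sum_i i\, r_i}$, giving $d(N) \ge \sum_{i \ge 1} i\, r_i$. Combined with the upper bound, $d(N) = \sum_{i \ge 1} i\, r_i$.

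The main obstacle is the two-sided comparison $|a_{i,j}^{m}|_S \sim m^{1/i}$ underlying both bounds, and in particular the bookkeeping in the lower-bound construction: writing $a_{i,j}^m$ efficiently produces spurious contributions in $N^{i+1}, N^{i+2}, \ldots$ that must be canceled without exceeding the $\lesssim m^{1/i}$ letter budget. Controlling these uniformly is exactly what the Hall--Petrescu collection formula supplies, and making the induction on class interact cleanly with the simultaneous realization of all coordinates is the delicate point; the abelian base case $N = N^1$, where $\mathscr{G}_N(n) \sim n^{r_1}$, is immediate.
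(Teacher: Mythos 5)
The paper does not prove this statement: it is quoted as a known result of Guivarc'h and Bass, with the proof deferred entirely to the cited references. So there is no internal argument to compare yours against; the relevant comparison is with the classical proof, and your outline is essentially that proof. You set up Mal'cev-type coordinates adapted to the lower central series, reduce the count to the two-sided estimate $|a^m|_S \sim m^{1/i}$ for $a \in N^i$ (equivalently, $|g|_S \le n$ forces the depth-$i$ coordinates to be $O(n^i)$, and conversely any such tuple is realized by a word of length $O(n)$), and then count lattice points in the box $\prod_i [-Cn^i, Cn^i]^{r_i}$. This is exactly the Bass--Guivarc'h strategy, and your handling of the two delicate points --- the collection-process induction for the upper bound, and the downward-induction cancellation of higher-weight error terms in the lower bound, where the key observation is that a weight-$(i+1)$ correction of exponent $m^{(i+1)/i}$ still costs only $O(m^{1/i})$ letters --- is correct in outline. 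Be aware, though, that as written this is a proof plan rather than a proof: both inductive lemmas are asserted with a description of how you \emph{would} prove them, and the uniqueness of the normal form in the presence of torsion (needed for injectivity in the lower bound) is waved through. None of these gaps is conceptual, and all are filled in the standard references (Bass's paper, or the treatment in de la Harpe), but if this were to stand alone you would need to carry out the Hall--Petrescu bookkeeping explicitly.
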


The quantity $d(N)$ is also referred to as the \emph{homogeneous dimension} of $N$; see e.g. \cite{delaHarpe}. If $G$ is a finitely-generated virtually nilpotent group, and $N$ is a (necessarily finitely-generated) nilpotent subgroup of finite index, then $d(G) = d(N)$.

It is a fact, not hard and proved in Bass \cite{Bass}, is that if $G$ is a finitely-generated nilpotent group and $H \subset G$ a subgroup of infinite index, the $d(H) < d(G)$. Of course, the same holds if $G$ is virtually nilpotent and $H \subset G$ infinite index. Indeed, if $N \subset G$ is a finite-index subgroup, then $N\cap H$ has finite index in $H$, so $d(N\cap H) = d(H)$; also, $N\cap H$ has infinite index in $N$, so by Bass, $d(N\cap H) < d(N) = d(G).$



\begin{proof}[Proof of Theorem \ref{StructureTheorem}]
Let us write $\mathcal{I}$ for the set of $I_i$, and $\mathcal{M}$ for the set of $M_i$. We have a bit of freedom in constructing $\mathcal{I}$, but no control over $\mathcal{M}$. Indeed, if $M_i$ and $M_j$ are minimal open sets for the action of $N$, we claim they are equal or disjoint. Suppose neither of these holds; then we can find $x \in M_i \cap M_j$, and without loss of generality $y \in M_i \setminus M_j$. A small open neighborhood of $x$ must be contained in $M_i \cap M_j$, since $M_i \cap M_j$ is open. Since the orbit of $y$ is dense in $M_i$ and $x \in M_i$, the orbit of $y$ must enter this small neighborhood of $x$, and hence enter $M_i \cap M_j$. But since $M_j$ is a $G$-invariant set and $y \notin M_j$, the orbit of $y$ cannot enter $M_j$, a contradiction.

First suppose that $M = S^1$, and that the action of $G$ is minimal. We claim that $G$ must be abelian. By Proposition 2.1, $\Fix([G, G])$ is nonempty. Since this set is $G$-invariant, by minimality it is dense in $S^1$; therefore, $\Fix([G, G]) = S^1$. Thus $G$ is abelian.

If the action of $G$ on $S^1$ is not minimal, let $x \in S^1$ be a point whose $G$-orbit is not dense. Since $\overline{\mathcal{O}_G(x)}$ is a $G$-invariant set, we may restrict attention to the open intervals in $S^1\setminus \overline{\mathcal{O}_G(x)}$. If, given an arbitrary maximal open interval $I \subset S^1\setminus \overline{\mathcal{O}_G(x)}$, we can find the desired sets $I_i$ and $M_i$ for the group $G_I = \{g|_I \colon g \in G, g(I) = I\}$, then we can certainly find these sets for the $G$-action on $S^1$. We may therefore assume without loss of generality that the manifold on which $G$ acts is an interval, say $M = \R$.

Suppose that $M_i$ is some minimal open $G$-set, and $I \subset M_i$ is a maximal open interval. Again, let $G_I = \{g|_I \colon g \in G, g(I) = I\}$. As above, by applying Theorem \ref{RadonMeasure}, we can see that $G_I$ is abelian.

If $\bigcup_i M_i \subset \R$ is dense, then the theorem is proved. Therefore, we assume that $\R \setminus \bigcup_i M_i$ has nonempty interior. Let $I \subset \R \setminus \bigcup_i M_i$ be an arbitrary maximal open interval. Let $G_I$ be as before. If we can find a set of intervals $I_i$ for the action of $G_I$ on $I$ having the requisite properties (including dense union in $I$), then we automatically get the desired intervals for the action of $G$ on the $G$-orbit of $I$. Therefore, it suffices to restrict attention to $I$; we may assume $G$ acts on $\R$ with no open minimal sets.

We will show, by induction on $d(G)$, that for a group $G$ of homeomorphisms of $\R$ with no open minimal subsets we can find the desired permuted intervals. If $d(G) = 0$, then since $G\subset \Homeo_+(\R)$, $G$ is torsion-free, and therefore trivial. Thus the result is trivial in this case; we let $\mathcal{I}$ simply contain $\R$.

If $d(G) = 1$, it is not hard to see that $G \cong \Z$. Indeed, $G$ contains a nilpotent subgroup of finite index $N$ such that $d(N) = 1$, and a torsion-free nilpotent group with $d(N) = 1$ must be isomorphic to $\Z$ (any larger nilpotent group would contain a copy of $\Z^2$, and $d(\Z^2) = 2$). Thus $G$ is a finite torsion-free extension of a group isomorphic to $\Z$, so $G$ itself is isomorphic to $\Z$. Therefore, $G$ is generated by a single element $g$. If $\Fix(g)$ has nonempty interior, we may add the maximal open intervals in $\Fix(g)$ to $\mathcal{I}$. Thus we may remove the fixed points of $g$, and show that we get the desired interval structure on the remaining open intervals $I$. Let $x$ be an arbitrary element of such an $I$, and let $I_n$ be the interval $(g^n(x), g^{n + 1}(x))$. This gives us the desired interval structure.

Now let $d(G)$ be arbitrary, and assume the result holds for finitely-generated virtually nilpotent subgroups of $\Homeo_+(\R)$ with no open minimal regions having growth degree smaller than that of $G$. We may remove any global fixed points of the action of $G$ on $\R$, putting any open intervals contained therein into $\mathcal{I}$. Thus we may assume that $G$ has no global fixed points. Obviously, $(\R \setminus \Fix([G, G])) \cup \Int(\Fix([G, G])) \subset \R$ is dense, and these two sets are $G$-invariant. Therefore, it suffices to find the desired intervals in $\R \setminus \Fix([G, G])$ and in $\Int(\Fix([G, G]))$.

To find the necessary intervals for $\R \setminus \Fix([G, G])$, it suffices to do this for an arbitrary maximal open interval $I \subset \R \setminus \Fix([G, G])$. As before, form $G_I$ by taking the subgroup of $G$ sending $I$ to itself, followed by the quotient of this subgroup that we get by restriction to $I$. We have taken a quotient of a subgroup of infinite index, since if $g(I) \neq I$ then $g^n(I) \neq I$ for all $n > 0$. Therefore, as noted above, we have that $d(G_I) < d(G)$. So by inductive hypothesis we can find the desired intervals for the action of $G_I$ on $I$.

Similarly, if $I$ is a maximal open interval in $\Int(\Fix([G, G]))$ then the group $G_I$ satisfies $d(G_I) < d(G)$, completing the argument by induction, \emph{unless} $G$ is abelian (so $I$ is all of $\R$). In that case we have $G \cong \Z^k$ for some $k > 1$. We have assumed that $G$ has no global fixed points, so some $f \in G$ is conjugate to a translation and we can consider $\bar{G} = G/f$ to be a subgroup of $\Homeo_+(S^1)$ isomorphic to $\Z^{k - 1}$. $\bar{G}$ does not act minimally on $S^1$, or else $G$ would have acted minimally on $\R$. Therefore, $\bar{G}$ has a minimal invariant closed set $X \subset S^1$ which is either finite or a Cantor set. For $I$ a maximal open interval in $S^1 \setminus X$, we can take $\bar{G}_I$, the quotient of a subgroup of $\bar{G}$ acting on $I$. We have $d(\bar{G}_I) \leq k - 1 < k$, so by inductive hypothesis we can find the desired interval structure for the action of $\bar{G}_I$ on $I$ and hence for $G$ on $\R$.
\end{proof}

Now our goal is to prove Theorem \ref{Smoothability} with the aid of Theorem \ref{StructureTheorem}. We will assume that we are not in the situation of a minimal abelian action on $S^1$, because actions of that type can always be conjugated to a group of rotations. Therefore, we have intervals $I_{i, j}$ and $M_{i, j}$; we must construct a conjugacy to adjust the $G$-action on these intervals and the lengths of the intervals so that the result is $C^{1 + \alpha}$.

It will be useful to us to define a family of diffeomorphisms $\phi_{a, b}\colon (-a/2, a/2) \to (-b/2, b/2), a, b \in \R_{> 0}$ which is \emph{equivariant}, i.e. such that $\phi_{b, c}\circ\phi_{a, b} = \phi_{a, c}$. (Thus we have the structure of a groupoid, whose objects are intervals and whose morphisms are the diffeomorphisms.) Following Farb-Franks \cite{F&F} and Navas \cite{Navas}, who credit J. C. Yoccoz for the idea, for any $a > 0$ we define $\phi_a\colon \R \to (-a/2, a/2)$ by $\phi_a(x) = \frac{a}{\pi}\arctan(ax)$. Then we define $\phi_{a,b}\colon (-a/2, a/2) \to (-b/2, b/2)$ by $\phi_{a, b} = \phi_b\phi_a^{-1}$.

It is immediate from this definition that the diffeomorphisms $\phi_{a, b}$ are equivariant. They have very well-behaved differentiability properties, namely that $\phi_{a, b}'(-a/2) = \phi_{a, b}'(a/2) = 1$, and $\phi_{a, b}'$ is uniformly close to 1 provided that $b/a$ is close to 1. In fact, Navas (\cite{Navas}, Lemma 4.1.25) shows that for any H\"{o}lder exponent $\alpha$, if $a > 0$ and $b > 0$ are such that $a/b \leq 2$, $b/a \leq 2$, and $$\left|\frac{b}{a} - 1\right|\cdot \frac{1}{a^\alpha} \leq C,$$ then we get a bound on the $C^\alpha$ \emph{norm} of $\phi_{a, b}'$:

\begin{equation}
 |\phi_{a, b}'|_{C^\alpha} \coloneqq \sup_{x, y \in (-a/2, a/2)} \frac{|\phi_{a, b}'(y) - \phi_{a, b}'(x)|}{|y - x|^\alpha} \leq 6\pi C.
\end{equation}


It is these properties which make this family useful to us. If $I = (x - a/2, x + a/2)$, $J = (y - b/2, y + b/2)$, we will abuse notation and write $\phi_{a, b}\colon I \to J$ where we really mean $T_y\phi_{a, b}T_{-x}\colon I \to J$. We will similarly abuse notation for intervals lying on $S^1$.

\begin{lemma}
\label{DefinesHomeo}
Suppose we are given a set of finite disjoint open intervals $\{I_i \subset M\colon i \in \Z\}$ such that $U = \cup_i I_i \subset M$ is dense. Let $l_i$ be the length of $I_i$ for each $i$. Let $l_i'$ be positive real numbers such that the following condition is satisfied: whenever $S \subset \Z$ is such that $\cup_{i \in S} I_i$ is contained in some bounded interval, $\Sigma_{i \in S} l_i' < \infty$. In particular, this holds if $\Sigma_{i \in \Z} l_i' < \infty$. Then there exists a homeomorphism $\phi\colon M \to \phi(M)$ such that the length of $\phi(I_i)$ is $l_i'$, and $m(\phi(C)) = 0$, where $C = U^c$ and $m$ is the Lebesgue measure.
\end{lemma}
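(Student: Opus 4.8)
The plan is to build $\phi$ interval by interval, first on $U = \bigcup_i I_i$ and then extend continuously to all of $M$. Fix a basepoint, say $0 \in M$ (if $M = \R$; on $S^1$ I work with a lift to $\R$). First I would reorder the intervals $I_i$ by their position on the line rather than by their index, so that I can speak of the intervals lying to the left and to the right of the basepoint. On each $I_i$, define $\phi|_{I_i}$ to be an orientation-preserving affine map onto an interval $J_i$ of length $l_i'$; the only remaining freedom is where to place each $J_i$, and this is dictated by requiring $\phi$ to be monotone, i.e. order-preserving on the $I_i$. Concretely, for a point $x \in M$ I would \emph{define}
\begin{equation}
\phi(x) = \sum_{i\,:\,I_i \text{ lies entirely to the left of } x} l_i' \;+\; (\text{the affine displacement within the }I_i\text{ containing }x,\text{ if any}),
\end{equation}
measuring signed length so that intervals to the left of the basepoint contribute negatively. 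The summability hypothesis is exactly what makes each such sum finite: any bounded set of $x$ only sees intervals inside a bounded region, and by assumption their new lengths $l_i'$ sum to a finite value.

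**Checking $\phi$ is a well-defined homeomorphism onto its image.**
Next I would verify that $\phi$ is well-defined, strictly increasing, and continuous. Strict monotonicity is immediate from the formula, since moving $x$ to the right never decreases the sum and strictly increases it inside any $I_i$. For continuity, the only subtle points are the endpoints of the $I_i$ and the points of $C = U^c$. At a point of $C$, small perturbations of $x$ only pick up new intervals $J_i$ whose total length is small (again by the local summability condition, the tails of a convergent sum of the $l_i'$ go to zero), so $\phi$ has no jumps. A strictly increasing continuous map on $M$ is automatically a homeomorphism onto its image $\phi(M)$. The verification that $\phi(I_i)$ has length exactly $l_i'$ is built into the affine definition, so nothing further is needed there.

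**The measure-zero claim.**
The main obstacle — the one genuinely nontrivial assertion — is that $m(\phi(C)) = 0$. The key observation is that $\phi(U) = \bigcup_i J_i$ is an open subset of the bounded image interval $\phi(M)$, and it is \emph{dense} in $\phi(M)$ because $U$ is dense in $M$ and $\phi$ is a homeomorphism (homeomorphisms preserve density). Its complement within $\phi(M)$ is precisely $\phi(C)$. Now on any bounded sub-interval of $\phi(M)$, the total length of the $J_i$ meeting it equals $\sum l_i'$ over the corresponding index set, which by hypothesis is finite and in fact accounts for the \emph{entire} length of that sub-interval: since the $I_i$ have dense union and I placed the $J_i$ contiguously in the same order with no gaps beyond those forced by $C$, the lengths $\sum_i l_i'$ over intervals inside a given window of $\phi(M)$ exhaust that window's length. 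Hence $m(\phi(C)) = m(\phi(M)) - m(\phi(U)) = 0$ on every bounded window, and summing over a countable exhaustion of $M$ gives $m(\phi(C)) = 0$ overall. I expect the delicate bookkeeping to be in making the contiguity precise — ensuring that the way the $J_i$ are laid out leaves no positive-length gaps except where forced by the combinatorics of $C$ — but conceptually the argument is simply that $\phi$ redistributes all the length onto the images of the $I_i$.
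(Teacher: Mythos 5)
Your proposal is correct and follows essentially the same route as the paper: the same explicit formula $\phi(x) = \sum_{I_i \text{ left of } x} l_i'$ plus an affine displacement, the same monotonicity/continuity checks via the local summability and density hypotheses, and the same observation that $\phi(y)-\phi(x)$ is entirely accounted for by the images of the intervals between $x$ and $y$, forcing $m(\phi(C))=0$. No substantive differences.
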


\begin{proof}
Assume $M = \R$; the other cases are almost the same. Define $\phi(0) = 0$. Without loss of generality, $0 \in C$. Let $x > 0$ (the case $x < 0$ is similar). If $x \in C$, let $\phi(x) = \sum_{I_i \subset (0, x)} l_i'$. If $x \in I_j = (a_j, b_j)$, let $\phi(x) = \sum_{I_i \subset (0, x)} l_i' + (x - a_j)\frac{l_j'}{l_j}$. First, this really does define a function on the whole real line since by assumption $\sum_{I_i \subset (0, x)} l_i' < \infty$. This function $\phi$ is monotone: if $y > x$, then since $U \subset \R$ is dense there is (at least part of) an interval $I_i$ lying between $x$ and $y$. We claim it is also continuous. If $x$ lies in an interval, continuity at $x$ is obvious. Suppose $x \in C$, and $(x_n)_{n \geq 1} \subset C$ is a strictly increasing sequence of points approaching $x$ from the left with $x_1 > 0$. Note that $\phi(x) = \sum_{I_i \subset (0, x)} l_i' = \lim_{n \to \infty} \sum_{I_i \subset (0, x_n)} l_i'$, since $I_i \subset (0, x)$ implies $I_i \subset (0, x_n)$ for some $n$. But the right-hand side is just $\lim_{n \to \infty} \phi(x_n)$. Therefore, $\phi$ is a monotone-increasing, continuous function, hence a homeomorphism onto its image. Notice that $m(\phi(C)) = 0$, since for $y > x$, $\phi(y) - \phi(x)$ depends only on intervals lying between $x$ and $y$.
\end{proof}

Recall that we write $I_{i, j}$ (resp. $M_{i, j}$) for the open intervals making up the sets $I_i$ (resp. $M_i$) in $\mathcal{I}$ (resp. $\mathcal{M}$). The numbering is arbitrary. After applying a topological conjugacy, we want these intervals (which by abuse of notation we will still write $I_{i, j}$ or $M_{i, j}$) to have certain lengths, which we now define. Let $d = d(G)$. Recall that we are trying to conjugate $G$ to a group of $C^{1 + \alpha}$ diffeomorphisms, where $0 < \alpha < 1/d$.

Define $$|I_{i, j}|_S = 1 + \min\{|g|_S\colon (g\colon I_{i, 0} \to I_{i, j})\}, |M_{i, j}|_S = 1 + \min\{|g|_S\colon (g\colon I_{i, 0} \to M_{i, j})\}.$$

We will make the length of $I_{i, j}$ after conjugacy $$\ell_{i, j} = \frac{1}{(2^{|i|\alpha} + |I_{i, j}|_S)^{1/\alpha}},$$ and in exactly the same way we will make the length of $M_{i, j}$ after conjugacy $$\ell_{i, j}' = \frac{1}{(2^{|i|\alpha} + |M_{i, j}|_S)^{1/\alpha}}.$$

\begin{proposition}
The sum $\sum_{i, j} \ell_{i, j} + \ell_{i, j}' < \infty$.
\end{proposition}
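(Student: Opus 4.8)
The plan is to prove convergence by summing first over $j$ for each fixed $i$, and then over $i$. Since $\ell_{i,j}$ and $\ell_{i,j}'$ have exactly the same form, it suffices to bound $\sum_{i,j}\ell_{i,j}$; the argument for $\ell_{i,j}'$ is word for word the same, using the intervals $M_{i,j}$ in place of $I_{i,j}$. Fix $i$ and abbreviate $A = 2^{|i|\alpha}$, so that $\ell_{i,j} = (A + |I_{i,j}|_S)^{-1/\alpha}$. The one genuine input is a counting estimate coming from polynomial growth. By the Structure Theorem the intervals making up $I_i$ form a single $G$-orbit, so every $I_{i,j}$ with $|I_{i,j}|_S \le n$ is the image $g(I_{i,0})$ of some $g$ in the ball $\{g : |g|_S \le n-1\}$; hence the assignment $g \mapsto g(I_{i,0})$ surjects that ball onto these intervals, and the number of such $j$ is at most $\mathscr{G}_G(n-1) \lesssim n^{d}$, since $G$ has polynomial growth of degree $d$.

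With this bound in hand I would estimate the inner sum by comparison with an integral. Using the identity $(A+m)^{-1/\alpha} = \tfrac1\alpha\int_m^\infty (A+t)^{-1/\alpha-1}\,dt$ and interchanging sum and integral (Tonelli), the counting bound gives
\[
\sum_j \ell_{i,j} = \frac1\alpha\int_0^\infty \frac{\#\{j : |I_{i,j}|_S \le t\}}{(A+t)^{1/\alpha+1}}\,dt \;\lesssim\; \int_0^\infty \frac{t^{d}}{(A+t)^{1/\alpha+1}}\,dt.
\]
The substitution $t = As$ converts the right-hand side into $A^{d - 1/\alpha}\int_0^\infty s^{d}(1+s)^{-1/\alpha-1}\,ds$. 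That remaining integral is a finite constant precisely because $1/\alpha > d$: near infinity the integrand decays like $s^{d - 1/\alpha - 1}$, whose exponent lies below $-1$, while near zero it behaves like $s^{d}$ and is harmless. Thus $\sum_j \ell_{i,j} \lesssim A^{d-1/\alpha} = 2^{-|i|(1 - \alpha d)}$, and since $1 - \alpha d > 0$ this is geometrically summable over $i \in \Z$, yielding $\sum_{i,j}\ell_{i,j} < \infty$.

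The point requiring the most care — and the step I would flag as the real obstacle — is exactly this interplay between the two indices. It is tempting to claim that the inner sum converges on its own because $n^{1/\alpha}$ outgrows the counting bound $n^{d}$, but this is false under the stated hypothesis: convergence of $\sum_n n^{d}/n^{1/\alpha}$ would require $1/\alpha > d+1$, whereas we assume only $1/\alpha > d$. Both the finiteness of the inner sum and the geometric decay in $i$ come from keeping the offset $A = 2^{|i|\alpha}$ in the denominator, which regularizes the small-$t$ part of the integral (divergent at $0$ without it) and simultaneously produces the factor $A^{d-1/\alpha}$. The exponents conspire so that the single condition $\alpha < 1/d$ makes the inner integral converge at infinity and the outer geometric series converge at once, which is why this is the sharp constraint on $\alpha$.
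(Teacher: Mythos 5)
Your proof is correct, and it follows the same overall scheme as the paper's --- fix $i$, count the intervals $I_{i,j}$ of given word-length via polynomial growth, sum over $j$, then observe exponential decay in $|i|$ --- but the two key technical inputs differ in a way worth recording. The paper groups the intervals by the exact value $|I_{i,j}|_S = n$ and invokes the sphere estimate $\#\{g \in G \colon |g|_S = n\} \sim n^{d-1}$, after which a direct termwise comparison gives $\sum_j \ell_{i,j} \lesssim \sum_n n^{d-1-1/\alpha}$, convergent precisely when $1/\alpha > d$; the exponential decay in $|i|$ is then left to the reader. You instead use only the ball estimate $\#\{j \colon |I_{i,j}|_S \le n\} \lesssim n^d$ and recover the lost factor of $n$ by summation by parts (your integral representation plus Tonelli), and you carry the offset $A = 2^{|i|\alpha}$ through the computation to produce the explicit factor $A^{d-1/\alpha} = 2^{-|i|(1-\alpha d)}$. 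What your route buys is, first, independence from the sphere-count asymptotic, which is a genuinely finer fact about nilpotent groups than the ball growth $\mathscr{G}_G(n) \sim n^d$ used elsewhere in the paper, and second, a complete proof of the geometric summability over $i$ that the paper only asserts. Your closing remark is also accurate: a naive termwise use of the ball count (bounding the number of intervals with $|I_{i,j}|_S = n$ by $n^d$) would demand $1/\alpha > d+1$, and it is exactly the sphere count in the paper's version, or the Abel summation in yours, that recovers the sharp threshold $\alpha < 1/d$.
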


\begin{proof}
 Let us consider $\sum_{i, j}\ell_{i, j}$, the other sum being similar. Fix $i$. Note that $\#\{g \in G \colon |g|_S = n\} \sim n^{d - 1}$. Therefore, $\#\{j\colon |I_{i, j}|_S = n\} \lesssim n^{d - 1}$, since $|I_{i, j}|_S = n$ implies that there is $g$ of length $n$ such that $g\colon I_{i, 0} \to I_{i, j}$, but there may be more than one such $g$. Thus modulo constants, the sum $\sum_j \ell_{i, j}$ is bounded by $\sum_{n \geq 0} \frac{n^{d - 1}}{(2^{|i|\alpha} + n)^{1/\alpha}}$. This sum clearly converges, since it has the form $\sum 1/n^{1/\alpha - d + 1}$ and $1/\alpha - d > 0$. In fact, the reader can check that the value to which it converges decays exponentially as $|i| \to \infty$, so $\sum_{i, j} \ell_{i, j} < \infty$.
\end{proof}

Therefore, by Lemma \ref{DefinesHomeo}, there exists a conjugacy after which the intervals have these lengths and their complement has measure 0.

We apply a further topological conjugacy, after which the action of $G$ has the following form. The unique map from $I_{i, 0}$ to $I_{i, j}$ under the action of $G$ is defined to be $\phi_{\ell_{i, 0}, \ell_{i, j}}$ (so the map from $I_{i, j}$ to $I_{i, k}$ is $\phi_{\ell_{i, j}, \ell_{i, k}}$). For each $j$, we arbitrarily choose $g_{i, j} \in G \colon M_{i, 0} \to M_{i, j}$ such that $|g_{i, j}|_S = |M_{i, j}|_S - 1$ (so $g_{i, j}$ is an element of minimum length among those sending $M_{i, 0}$ to $M_{i, j}$). We define $g_{i, j}$ to act via $\phi_{\ell_{i, 0}', \ell_{i, j}'}$.

Finally, we must describe how $G$ acts on $M_{i, 0}$ after conjugacy. Let $G_i = G_{M_{i, 0}} = \{g|_{M_{i, 0}}\colon g \in G, g(M_{i, 0}) = M_{i, 0}\}$. Recall from before that $G_i$ is abelian. We ask that every element of $G_i$ have the form $\phi_{\ell_{i, 0}'}T_a\phi_{\ell_{i, 0}'}^{-1}, a \in \R$. 


We claim that the conjugated action of $G$ as we have now described it is of class $C^{1 + \alpha}$. To prove this, we need the following lemma, which will tell us that we need only check the action is $C^{1 + \alpha}$ on the intervals $I_{i, j}$ and $M_{i, j}$.

\begin{lemma}[\cite{Navas}, Lemma 4.1.22]
\label{Patching}
Let $\{I_n\colon n \in \N\}$ be a family of closed intervals in $[0, 1]$ (resp. in $S^1$) having disjoint interiors and such that the complement of their union has zero Lebesgue measure. Suppose that $\phi$ is a homeomorphism of $[0, 1]$ (resp. $S^1$) such that its restrictions to each interval $I_n$ are $C^{1 + \alpha}$ diffeomorphisms which are $C^1$-tangent to the identity at the endpoints of $I_n$ and whose derivatives have $\alpha$-norms bounded from above by a constant $C$. Then $\phi$ is a $C^{1 + \alpha}$ diffeomorphism of the whole interval $[0, 1]$ (resp. $S^1$), and the $\alpha$-norm of its derivative is less than or equal to $2C$.
\end{lemma}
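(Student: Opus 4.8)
The plan is to exhibit an explicit candidate for the derivative of $\phi$, show that candidate is $\alpha$-H\"older with the sharp constant, and then identify $\phi$ with its integral. Write $K = [0,1] \setminus \bigcup_n \Int(I_n)$ for the exceptional set, so $m(K) = 0$ by hypothesis. Define $\psi\colon [0,1] \to \R$ by setting $\psi = (\phi|_{I_n})'$ on each $I_n$ and $\psi \equiv 1$ on the remainder of $K$. These prescriptions agree at the shared endpoints of the $I_n$, because the tangency hypothesis forces $(\phi|_{I_n})'$ to equal $1$ there. Since each $\phi|_{I_n}$ is a diffeomorphism we have $\psi > 0$ on $\Int(I_n)$, and $\psi \equiv 1$ elsewhere; once continuity of $\psi$ is in hand, compactness of $[0,1]$ gives $\min \psi > 0$.

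First I would prove that $\psi$ is $\alpha$-H\"older with H\"older constant at most $2C$, by a short case analysis on the positions of two points $x < y$. If $x$ and $y$ lie in the same $I_n$, the per-interval bound gives $|\psi(x) - \psi(y)| \le C|x - y|^\alpha$ directly. If they lie in different intervals, with $I_n$ ending at $b$ and the next relevant interval $I_m$ beginning at $a$ (so $x < b \le a < y$ and $\psi(b) = \psi(a) = 1$), one inserts these endpoints and applies the triangle inequality: $|\psi(x) - \psi(y)| \le |\psi(x) - \psi(b)| + |\psi(a) - \psi(y)| \le C|x - b|^\alpha + C|a - y|^\alpha \le 2C|x-y|^\alpha$, since $|x-b|$ and $|a-y|$ are each at most $|x-y|$. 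The remaining cases, in which one of the points lies in $K$, collapse to a single such term. It is exactly this ``crossing a gap'' step that produces the factor $2$ in the stated bound, which is reassuring evidence that this is the intended route. In particular $\psi$ is continuous.

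Next I would establish $\phi(x) = \phi(0) + \int_0^x \psi(t)\,dt$. Since $\psi$ is continuous, this identity upgrades $\phi$ to class $C^1$ with $\phi' = \psi$; together with the H\"older bound it then gives $\phi \in C^{1+\alpha}$ with $|\phi'|_{C^\alpha} \le 2C$, and $\min\psi > 0$ makes $\phi$ a diffeomorphism. On each $I_n$ the fundamental theorem of calculus shows that $\int_{I_n \cap [0,x]} \psi$ equals the increment of $\phi$ across $I_n \cap [0,x]$, and since $m(K) = 0$, summing over $n$ recovers $\int_0^x \psi$. The identity therefore reduces to the assertion that $\phi$ contributes no extra increment over the null set $K$, that is, that $m(\phi(K)) = 0$.

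This last point is the main obstacle, and it does \emph{not} follow from the per-interval hypotheses alone: one can contract each gap slightly, keeping the derivative equal to $1$ at the endpoints and the H\"older norm bounded, and thereby carry a measure-zero Cantor set onto a set of positive measure, producing a homeomorphism that satisfies every stated per-interval condition yet has a singular part concentrated on $K$ and hence fails to be $C^1$. What rescues the argument is that in our application $\phi$ is a group element, which \emph{permutes} the family $\{I_n\}$ (it sends intervals of the family to intervals of the family); hence $\phi(K) = K$ and $m(\phi(K)) = m(K) = 0$ automatically, equivalently the image intervals $\{\phi(I_n)\}$ again have full-measure union. Granting $m(\phi(K)) = 0$, the increment of $\phi$ over any subinterval is governed entirely by its values on the $I_n$, the integral identity holds, and the proof is complete. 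The circle case is identical after cutting $S^1$ at a point.
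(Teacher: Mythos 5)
The paper does not prove this lemma at all --- it is imported verbatim from Navas's book (Lemma 4.1.22) --- so there is no in-paper argument to compare against; your proof is the standard one, and it is essentially Navas's: glue the derivatives into a candidate $\psi$, check the H\"older bound by inserting the endpoints of the gap (which is indeed where the factor $2$ comes from), and recover $\phi$ as $\phi(0)+\int_0^x\psi$. Your case analysis for the bound $|\psi|_{C^\alpha}\le 2C$ is complete and correct, and your identification of the one genuinely delicate step is also correct: the integral identity is equivalent to $m(\phi(K))=0$, where $K$ is the complement of $\bigcup_n \mathrm{Int}(I_n)$, and this does \emph{not} follow from the per-interval hypotheses alone. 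Your counterexample sketch is sound: taking $|\phi(I_n)|=|I_n|(1-c|I_n|^\alpha)$ with the maps $\phi_{a,b}$ of the present paper realizes every per-interval condition (derivative $1$ at the endpoints, uniformly bounded $\alpha$-norms by estimate (1)) while sending $K$ onto a set of measure $c\sum_n|I_n|^{1+\alpha}>0$, so the resulting homeomorphism is not even absolutely continuous. The missing hypothesis is present in Navas's actual statement and in every use made of the lemma here: $\phi$ permutes the family $\{I_n\}$ (the $g_n$ send $I_{i,j}$ to $I_{i,j'}$ and $M_{i,j}$ to $M_{i,j'}$), whence $\phi(K)=K$ and $m(\phi(K))=0$. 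Alternatively, if one reads ``$C^1$-tangent to the identity at the endpoints'' literally --- the $1$-jet of $\phi$ agrees with that of the identity, so $\phi$ fixes every endpoint --- then $\phi$ fixes each $I_n$ setwise and the same conclusion holds, but that reading would make the lemma inapplicable to the paper's situation. Either way your repair closes the gap, and the rest of the argument (continuity and positivity of $\psi$ via compactness, the fundamental theorem of calculus on each $I_n$, summing using $m(K)=0$) is correct as written; the circle case follows by cutting at a point as you say.
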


Before we finish the proof of Theorem \ref{Smoothability}, we need a definition. Let $G$ be a finitely-generated group, and $H$ a finitely-generated subgroup. Let $S, T$ be finite generating sets of $G$ and $H$, respectively. We define the \emph{distortion} to be $$\dist(H, T; G, S)(n) = \diam_T(B_S(n) \cap H),$$ where $B_S(n)$ is the ball of radius $n$ about the identity in $G$ measured in the word metric coming from $S$, and $\diam_T$ is the diameter measured in the word metric coming from $T$. Although this definition depends on the chosen generating sets $S$ and $T$, its growth type does not; as for growth, if $S'$ and $T'$ are different generating sets of $G$ and $H$ respectively, then $\dist(H, T'; G, S')(n) \sim \dist(H, T; G, S)(n)$, so we can speak unambiguously about the equivalence class $\dist(H, G)(n)$. For background, see e.g. \cite{Gromov}.

If $G$ is a finitely-generated virtually nilpotent group, and $H$ is any subgroup, we have $\dist(H, G)(n) \lesssim \mathcal{G}_G(n)$, with $\dist(H, G)(n) \sim \mathcal{G}_G(n)$ only if $G$ is finite or has $\Z$ as a finite-index subgroup. Otherwise, $G$ contains a finite-index torsion-free nilpotent subgroup $N$ which is not trivial or $\Z$. It is elementary to show that the distortion of $H \subset G$ is equivalent to the distortion of $H \cap N \subset N$.

If $N \cong \Z^m$, for any nontrivial subgroup $H \subset N$ we will have $\dist(H, N)(n) \sim n$, which is equivalent to $\mathcal{G}_{\Z^m}(n)$ only if $m = 1$. Otherwise, let $k > 1$ be the nilpotence class of $N$. We have $d(N) = \Sigma_{i \geq 1} i\cdot r_i$. It is a well-known fact (see for example 5.2.6. of \cite{Robinson}) that the abelianization of an infinite nilpotent group is infinite; if $N$ is finitely-generated, then the torsion-free rank of $G/[G, G]$ must be at least 1. On the other hand, the torsion-free rank of $N^k$ is at least 1 since $N$ is torsion-free and $N^k$ is nontrivial. Therefore, $d(N) \geq 1 + k$. On the other hand, for any $H \subset N$, $\dist(H, N)(n) \lesssim n^k$ (for an exact expression of $\dist(H, N)(n)$, see \cite{Osin}).

\begin{proof}[Proof of Theorem \ref{Smoothability}.]
By Lemma \ref{Patching}, all we need to show is that the generators $g_n$ for $G$ are $C^{1 + \alpha}$ with uniformly bounded $C^\alpha$ norm on the intervals $I_{i, j}$, $M_{i, j}$.

On the $I_{i, j}$, this is easy. Indeed, by construction $|g_n(I_{i, j})|_S$ can differ from $|I_{i, j}|_S$ by at most 1. The reader can check, with the interval lengths as we have defined them, together with equation (1), that we do get a uniform bound on the $C^\alpha$ norm of $g_n'$ on $I_{i, j}$.


Now suppose there are some minimal intervals $M_{i, j}$, so in particular $G$ is not finite and does not have a copy of $\Z$ as a finite-index subgroup. Consider the action of $g_n$ on an interval $M_{i, j}$. Say $g_n\colon M_{i, j}\to M_{i, j'}$. Note that, if $g_{i, j}$ and $g_{i, j'}$ are our chosen maps from $M_{i, 0}$ to $M_{i, j}$ and $M_{i, j'}$ respectively, then $$g_n = g_{i, j'}(g_{i, j'}^{-1}g_ng_{i, j})g_{i, j}^{-1}.$$

Here, $g_{i, j'}^{-1}g_ng_{i, j}$ sends $M_{i, 0}$ to itself. It is a word of length at most $2|M_{i, j}|_S + 2$, since $|g_{i, j'}|_S \leq |M_{i, j}|_S + 1$ and $|g_n|_S = 1$. Recall from above that the degree of distortion for subgroups of $G$ is at most the nilpotence class $k$ of a finite-index torsion-free nilpotent subgroup of $G$, which is strictly less than $d = d(G)$. The length of $g_{i, j'}^{-1}g_ng_{i, j}$ in terms of a generating set for $G_i$ will be at most of order $|M_{i, j}|_S^k$, and hence $\phi_{\ell_{i, 0}'}^{-1}g_{i, j'}^{-1}g_ng_{i, j}\phi_{\ell_{i, 0}'}$ will be translation by an amount of order at most $|M_{i, j}|_S^k$. Therefore, $g_n$ has the form $\phi_{b'}T_a\phi_b^{-1}$, where $$a \leq c_i|M_{i, j}|_S^k, b = \ell_{i, j}', b' = \ell_{i, j'}',$$ $c_i$ being a constant that depends on the speed of translation of the generators of $G_i$ on $M_{i, 0}$. By applying the appropriate conjugacy, we can make the translation speeds and hence the $c_i$ as small as we like.


We have $g_n\colon M_{i, j} \to M_{i, j'}$ given by $$g_n(x) = \frac{b'}{\pi}\arctan(\frac{b'}{b}\tan(\frac{\pi x}{b}) + b'a).$$

This has derivative $$g_n'(x) = \frac{b'^2}{b^2}\cdot\frac{1 + \tan^2(\frac{\pi x}{b})}{1 + (\frac{b'}{b}\tan(\frac{\pi x}{b}) + b'a)^2},$$

and second derivative $$g_n''(x) = \frac{2\pi\frac{b'^2}{b^3}\sec^4(\frac{\pi x}{b})\big(\frac{b'^2}{b}a\sin^2(\frac{\pi x}{b}) + (b'^2a^2 - \frac{b'^2}{b^2} + 1)\sin(\frac{\pi x}{b})\cos(\frac{\pi x}{b}) - \frac{b'^2}{b}a\cos^2(\frac{\pi x}{b})\big)}{(1 + (\frac{b'}{b}\tan(\frac{\pi x}{b}) + b'a)^2)^2}.$$

By analyzing this second derivative, we will show that we get the desired bound on the $C^\alpha$ norm of $g_n'$ on the intervals $M_{i, j}$.

Consider the function $\frac{\sec^4(\frac{\pi x}{b})}{(1 + (\frac{b'}{b}\tan(\frac{\pi x}{b}) + b'a)^2)^2}$ on $M_{i, j}$. As we vary $i$ and $j$, we vary the constants $a$, $b$, and $b'$; observe that for all $x$ and for all $i, j$ these functions are uniformly bounded, provided that we make the $c_i$ small enough. This is because whenever $|i|$ or $|M_{i, j}|_S$ is large we will have $\frac{b'}{b} \approx 1$ and $b'a \approx 0$.

Obviously, the sine and cosine functions in the numerator have magnitude at most 1, so we only need to consider their coefficients as we vary $i$ and $j$, these being (disregarding a factor of $2\pi$) $\frac{b'^4}{b^4}a$ and $\frac{b'^2}{b^3}(b'^2a^2 - \frac{b'^2}{b^2} + 1)$. We claim that there exists a constant $C$ such that these are both bounded above by $$C\cdot b^{\alpha - 1} = C\cdot (2^{|i|\alpha} + |M_{i, j}|_S)^{1/\alpha - 1}$$ for all $i, j$.

First, consider $\frac{b'^4}{b^4}a$. Whenever $|i|$ or $|M_{i, j}|_S$ is large, $\frac{b'^4}{b^4}$ will be very close to 1. Moreover, we know that $$a \leq c_i|M_{i, j}|_S^k \leq (2^{|i|\alpha} + |M_{i, j}|_S)^{1/\alpha - 1}$$ provided $c_i$ is small enough (since $1/\alpha - 1 > k$).

Now, consider $\frac{b'^2}{b^3}(b'^2a^2)$. This is equal to $\frac{b'^4}{b^4}\cdot ba^2$. Again, when $|i|$ or $|M_{i, j}|_S$ is large, $\frac{b'^4}{b^4}$ will be very close to 1, so we are left to consider $ba^2$. We have $$b = \frac{1}{(2^{|i|\alpha} + |M_{i, j}|_S)^{1/\alpha}} \text{ and } a \leq c_i|M_{i, j}|_S^k,$$ so 

\begin{align*}
ba^2 &\leq \frac{c_i^2|M_{i, j}|_S^{2k}}{(2^{|i|\alpha} + |M_{i, j}|_S)^{1/\alpha}} \\
&< \frac{c_i^2(2^{|i|\alpha} + |M_{i, j}|_S)^{2k}}{(2^{|i|\alpha} + |M_{i, j}|_S)^{1/\alpha}} \\
&= c_i^2(2^{|i|\alpha} + |M_{i, j}|_S)^{2k - 1/\alpha} \\
&< (2^{|i|\alpha} + |M_{i, j}|_S)^{1/\alpha - 1}
\end{align*} for appropriate choice of $c_i$, since $2k - 1/\alpha < 1/\alpha - 1$.

Finally, consider $\frac{b'^2}{b^3}(-\frac{b'^2}{b^2} + 1)$. If $|M_{i, j'}|_S = |M_{i, j}|_S$, then this term is 0. Otherwise, $$b = \frac{1}{(2^{|i|\alpha} + |M_{i, j}|_S)^{1/\alpha}} \text{ and } b' = \frac{1}{(2^{|i|\alpha} + |M_{i, j}|_S \pm 1)^{1/\alpha}}.$$

As before, for large $|i|$ or $|M_{i, j}|_S$, $\frac{b'^2}{b^2}$ is close to 1, so we can disregard it. Therefore, we are left with $$\frac{1}{b}(-\frac{b'^2}{b^2} + 1) = (2^{|i|\alpha} + |M_{i, j}|_S)^{1/\alpha}(-(\frac{2^{|i|\alpha} + |M_{i, j}|_S}{2^{|i|\alpha} + |M_{i, j}|_S \pm 1})^{2/\alpha} + 1).$$ The reader can check that as $2^{|i|\alpha} + |M_{i, j}|_S \to \infty$, the quantity $$(2^{|i|\alpha} + |M_{i, j}|_S)(-(\frac{2^{|i|\alpha} + |M_{i, j}|_S}{2^{|i|\alpha} + |M_{i, j}|_S \pm 1})^{2/\alpha} + 1)$$ approaches $\pm 2/\alpha$, so for large enough $C$, we will have $$\frac{b'^2}{b^3}(-\frac{b'^2}{b^2} + 1) \leq C\cdot (2^{|i|\alpha} + |M_{i, j}|_S)^{1/\alpha - 1}$$ for all $i, j$.





Therefore, the $C^\alpha$ norm of $g_n'$ on $M_{i, j}$ is at most $C$, for any $i, j$:

\begin{align*}
\frac{|g_n'(y) - g_n'(x)|}{|y - x|^\alpha} &\leq \frac{C(2^{|i|\alpha} + |M_{i, j}|_S)^{1/\alpha - 1}\cdot |y - x|}{|y - x|^\alpha} \\
&\leq \frac{C(2^{|i|\alpha} + |M_{i, j}|_S)^{1/\alpha - 1}\cdot \frac{1}{(2^{|i|\alpha} + |M_{i, j}|_S)^{1/\alpha}}}{(\frac{1}{(2^{|i|\alpha} + |M_{i, j}|_S)^{1/\alpha}})^\alpha} \\
&= C.
\end{align*}

Combining this with the fact that we have such a bound also on the $I_{i, j}$, and applying Lemma \ref{Patching}, we conclude that $g_n$ is $C^{1 + \alpha}$ on $M$, as desired.

\end{proof}

\begin{proof}[Proof of Corollary \ref{Corollary}.]
The proof is immediate, by choosing the lengths of the intervals appropriately and making the minimal abelian actions on the $M_{i, j}$ close enough to the identity in the $C^1$ topology.
\end{proof}

\end{document}